\title{Non Hilbertian Lorentzian Length Spaces}
\author{Jona Röhrig}
\definecolor{green}{HTML}{007F00}
\definecolor{red}{HTML}{7f0f0f}
\newcommand{\N}{\mathbb{N}} %natural numbers
\newcommand{\Z}{\mathbb{Z}} %integers
\newcommand{\R}{\mathbb{R}} %reals
\newcommand{\dd}{\text{d}} %differential
\newtheorem{definition}{Definition}[section]
\newtheorem{lemma}[definition]{Lemma}
\theoremstyle{plain} % definition
\newtheorem{remark}{Remark}
\newcommand\defindex[2][]{\textbf{#2}\ifthenelse{\equal{#1}{}}{\index{#2}}{\index{#1}}} %index entry+definition (bold text) where the index entry might differ from the inline text
\newcommand{\eqval}{\; \Leftrightarrow \;}
\newcommand{\imply}{\; \Rightarrow \;}
\newcommand{\mDef}{\coloneqq }
\newcommand{\pp}{\; \vcentcolon \;}
\newcommand\identity{1\kern-0.25em\text{l}}
\begin{document}
\maketitle
\begin{abstract}
In this note, the idea of finite-dimensional $L^p$ spaces is extended to the setting of Lorentzian length spaces to provide an example of a constant Finsler manifold that is nowhere Minkowskian. Looking at the sectional curvature bounds of this example leads to the more general statement that non-Hilbertian, normed spaces have no sectional curvature bounds. This statement holds in both the Riemannian and Lorentzian cases.
In addition, the Lorentzian $L^p$ spaces serve as an example in the context of Lorentzian Gromov-Hausdorff convergence. They show that unbounded sectional curvature or geodesic regularity is generally not preserved in the GH limit. 
This example also shows that Ricci bounds in the sense of TCD as well as bounds on the volume, the timelike diameter, and the dimension are not enough to ensure Gromov pre-compactness in the category of Lorentzian length spaces.
\end{abstract}

\section{Introduction}
Since the concept of Lorentzian length spaces was introduced in 2018 in \cite{kunzinger2018lorentz}, there have been multiple approaches to define the analogue of metric spaces in the setting of Lorentzian geometry. All capture the same idea of being the synthetic version of a Lorentzian manifold in the formulation of metric spaces. The distance between two causally related points (events) is given by the maximal time that can pass for an observer who visits both events. For spacetimes, this distance, called the time separation $\tau$, is given by the supremum of lengths over all causal curves connecting two events. \\
In the following, we will briefly revisit the definitions of Lorentzian (pre-) spaces \cite{kunzinger2018lorentz}, almost Lorentzian pre length spaces \cite{muller2019lorentzianGromovHausdorfAndFiniteness}, bounded Lorentzian spaces \cite{minguzzi2022lorentzian}, and Lorentzian spaces \cite{muller2024maximality} with a few remarks on their differences.
As a learning example, we will introduce the definition $(\R^{1,1},\tau^p)$ as an analogue to finite dimensional $L^p$ spaces, where the time separation function is given by $\tau(x,y)^p \mDef \left(\sup\{|x_0-y_0|^p - |x_1-y_1|^p,0\} \right)^{1/p}$ We show that although this is not a manifold, it still satisfies all the requirements of the definition mentioned above.
Calculating the sectional curvature of $(\R^{1,1},\tau^p)$, we find that although it is just two-dimensional, it has unbounded sectional curvature in both directions at every point.
This observation can be generalized in the following lemmas for the Riemannian and Lorentzian case, respectively:
\begin{lemma}
\label{lemParRm}
A normed vector space $(X,|\cdot|)$, where $|\cdot|$ does not come from an inner product, has neither upper nor lower sectional curvature bounds as a length space.
\end{lemma}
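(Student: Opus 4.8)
The plan is to prove the contrapositive: if a normed space has a sectional curvature bound (upper or lower), then the norm comes from an inner product. The key tool is that sectional curvature bounds are local and characterized via triangle comparison (Toponogov/CAT and CBB comparison), so it suffices to look at the geometry of small triangles. The strategy is to reduce everything to the two-dimensional situation: any failure of the parallelogram law happens in a 2-plane, so I can restrict to a 2-dimensional subspace spanned by two vectors $u,v$ where the parallelogram identity fails.

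Let me sketch the steps. First, I would recall that in a normed space, straight line segments $t\mapsto (1-t)x + ty$ are geodesics (this uses only the triangle inequality and homogeneity of the norm), so the length-space structure is the obvious one and geodesic triangles are genuine Euclidean-looking triangles with side lengths measured in the norm. Second, I would use the scaling invariance of a normed space: the map $x\mapsto \lambda x$ is a homothety scaling all distances by $\lambda$. Under a homothety by $\lambda$, sectional curvature scales by $\lambda^{-2}$. Since a normed space is homothetic to itself for every $\lambda>0$, any nonzero curvature bound can be pushed to $0$ or to $\pm\infty$; concretely, if the space had an upper bound $\kappa<\infty$ it would have to be $\le 0$, and a lower bound would have to be $\ge 0$. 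So the only candidate bounds are CAT$(0)$ (nonpositive curvature) and CBB$(0)$ (nonnegative curvature). Third, I would invoke the comparison characterization: a length space is CAT$(0)$ iff it satisfies the CN (Bruhat–Tits) inequality, i.e. for midpoints $m$ of $x,y$ one has $d(z,m)^2 \le \tfrac12 d(z,x)^2 + \tfrac12 d(z,y)^2 - \tfrac14 d(x,y)^2$; CBB$(0)$ satisfies the reverse. Translating into the norm with $z=0$, $x=u$, $y=-u$ type choices (so that the midpoint is a vertex), these comparison inequalities become exactly the parallelogram inequality $\|u+v\|^2 + \|u-v\|^2 \lessgtr 2\|u\|^2 + 2\|v\|^2$ in one direction or the other.

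So the heart of the argument is: an upper curvature bound forces one direction of the parallelogram inequality for all $u,v$, and a lower bound forces the other direction. Then I would invoke the classical Day–Jordan–von Neumann theorem: a norm satisfying the parallelogram inequality in either direction for all vectors actually satisfies the parallelogram \emph{equality}, hence comes from an inner product. (One-sided parallelogram inequalities holding for all vectors are known to be equivalent to the full identity, since one can swap the roles of $u+v$ and $u-v$ to get the reverse inequality.) This contradicts the hypothesis that the norm is non-Hilbertian, completing the contrapositive.

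The main obstacle I anticipate is the careful bookkeeping in the third step: translating the metric comparison inequalities (CN inequality / its reverse) into clean statements about the norm, and making sure the comparison is being applied to genuinely realized comparison triangles. In a normed space the comparison triangle in the model space $M_\kappa$ must have the same three side lengths, and one must check that the relevant geodesic and midpoint exist and are the straight segment, so that the distance $d(z,m)$ is literally $\|z - m\|$ with $m = \tfrac12(x+y)$. Once the midpoint-distance comparison is identified with the parallelogram expression, the reduction to the Jordan–von Neumann criterion is immediate, but verifying that the one-sided inequality must hold for \emph{all} pairs $u,v$ (not just small ones) again uses the scaling homogeneity of the norm, which removes any ``smallness'' restriction coming from the local nature of curvature bounds.
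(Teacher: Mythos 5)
Your proposal is correct, but it reaches the conclusion by a genuinely different route than the paper. The paper argues directly: it picks $x,y$ violating the parallelogram law, notes via the same substitution $x=u+v$, $y=u-v$ that you invoke that the violation then occurs in both directions, and then \emph{explicitly} builds the comparison triangle in $\tfrac{1}{\sqrt{k}}S^2$, computes $\bar{a}\bar{M}$ in closed form, and Taylor-expands in the scaling parameter $\lambda$ to show that the model-space correction to the Euclidean median formula is $O(\lambda^3)$ while the parallelogram defect of the norm scales linearly in $\lambda$; for small triangles the defect therefore dominates and the comparison inequality fails in whichever direction the defect dictates. You instead run the contrapositive through two standard black boxes: the self-homothety $x\mapsto\lambda x$ rescales any triangle-comparison bound to $\lambda^2\kappa$, so a local bound of either sign forces global CAT$(0)$ or CBB$(0)$; then the CN (Bruhat--Tits) inequality and its reverse, applied with $z=0$ and the midpoint $m=\tfrac12(x+y)$ of the straight segment, give a one-sided parallelogram inequality for all vectors, and Jordan--von Neumann/Day finishes. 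The two proofs share the same engine --- scale invariance of the parallelogram defect against higher-order curvature corrections, plus the $u\pm v$ substitution --- but yours is shorter and avoids all spherical trigonometry at the price of citing the continuity of the comparison condition as $\kappa\to 0$ and the CN characterization; the paper's explicit expansion is essentially a hands-on verification of exactly that limit. Two small points to tidy: your parenthetical choice ``$x=u$, $y=-u$'' degenerates (the midpoint is then $z$ itself) --- the clean choice is $z=0$ with $x,y$ arbitrary, giving $|x+y|^2+|x-y|^2\le 2|x|^2+2|y|^2$ directly; and you should say explicitly that the straight segment is a geodesic with midpoint $\tfrac12(x+y)$, so the comparison inequality genuinely applies to that point even when the norm is not strictly convex and geodesics are non-unique (the comparison must hold along every geodesic, so this is harmless).
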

\begin{lemma}
\label{Lem_LorUnb}
Let $X$ be a vector space with a time separation function $\tau$ which is translation invariant and such that $\tau(0,x)$ is homogeneous in $x$.\\
If there exists a set of vectors $x,y\in X$ such that $x,y,x+y,x-y\gg 0$ and \\
$2 \tau(0,x)^2 + 2 \tau(0,y)^2 \neq \tau(0,x+y)^2 + \tau(0,x-y)^2$, then $(X,\tau)$ has neither upper nor lower timelike sectional curvature bounds.
\end{lemma}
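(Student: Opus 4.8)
The plan is to mirror the strategy behind Lemma~\ref{lemParRm}, exploiting that $(X,\tau)$ is a Lorentzian cone: translation invariance together with homogeneity of $\tau(0,\cdot)$ make the whole configuration self-similar under the scaling $v\mapsto\lambda v$. I would first record a Minkowski ``median identity''. Consider the timelike triangle with apex $0$ and base vertices $y,x$ --- this is legitimate because $x-y\gg0$ yields the causal chain $0\ll y\ll x$, while $x+y\gg0$ places the base midpoint $m\mDef\tfrac12(x+y)$ in the chronological future of $0$. Translation invariance and homogeneity give the three side lengths $\tau(0,x),\ \tau(0,y),\ \tau(y,x)=\tau(0,x-y)$, and the median length $\tau(0,m)=\tfrac12\tau(0,x+y)$. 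In the flat model $\R^{1,n}$ the time separation is $\sqrt{-\langle\cdot,\cdot\rangle}$ for a quadratic form, so the usual Apollonius computation gives $4\,\bar\tau_0(\bar0,\bar m)^2=2\tau(0,x)^2+2\tau(0,y)^2-\tau(0,x-y)^2$, whereas in $X$ the true value is $4\tau(0,m)^2=\tau(0,x+y)^2$. The two agree exactly when the Lorentzian parallelogram law holds, so the hypothesis says precisely that the flat comparison median differs from the true one for this pair.

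Next I would feed the rescaled triangles $(0,\lambda x,\lambda y)$ into the definition of timelike sectional curvature bounds. Suppose, for contradiction, that $(X,\tau)$ has an upper timelike curvature bound $K$. For $\lambda$ small the rescaled triangle lies in a comparison neighbourhood, its sides are $\lambda\tau(0,x),\lambda\tau(0,y),\lambda\tau(0,x-y)$ by homogeneity, and its true median is $\tfrac\lambda2\tau(0,x+y)$. Triangle comparison in the constant-curvature model $M_K$ bounds the true median against the model median $d_K(\lambda\,\cdot\,)$; since $M_K$ is locally Minkowskian, $d_K(\lambda a,\lambda b,\lambda c)/\lambda\to\bar\tau_0$ as $\lambda\to0$, the curvature entering only at order $\lambda^2$. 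Dividing by $\lambda$ and letting $\lambda\to0$ therefore collapses the $K$-comparison to the flat one and produces, for every admissible pair $(u,v)$ (i.e. with $u,v,u+v,u-v\gg0$),
\[
\tau(0,u+v)^2+\tau(0,u-v)^2\ \le\ 2\tau(0,u)^2+2\tau(0,v)^2.\qquad(\star)
\]
A lower bound yields, by the same argument, the reverse inequality $(\star\star)$ for all admissible pairs (the direction being fixed by the Kunzinger--Sämann convention, but irrelevant below).

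To close the argument I would use a self-improvement substitution. The assignment $(u,v)\mapsto\bigl(\tfrac12(x+y),\tfrac12(x-y)\bigr)$ keeps us inside the admissible region --- indeed $u+v=x\gg0$, $u-v=y\gg0$, and $u,v\gg0$ because $x\pm y\gg0$ --- so if $(\star)$ held for all admissible pairs it would hold at $\bigl(\tfrac12(x+y),\tfrac12(x-y)\bigr)$, which rearranges to $(\star\star)$ for $(x,y)$; together with $(\star)$ at $(x,y)$ this forces the equality $2\tau(0,x)^2+2\tau(0,y)^2=\tau(0,x+y)^2+\tau(0,x-y)^2$, contradicting the hypothesis. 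Thus any upper bound is impossible; the symmetric computation (substitute into $(\star\star)$) rules out any lower bound. In short, a curvature bound of either type forces a one-sided Lorentzian parallelogram inequality valid for all admissible pairs, and any such one-sided inequality self-improves to full equality, so $(X,\tau)$ can have neither an upper nor a lower timelike sectional curvature bound.

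I expect the main obstacle to be the comparison-geometry step rather than the algebra: one must justify that straight affine segments are the maximal timelike geodesics realizing $\tau$ (so that $m$ is genuinely a point of the comparison triangle and the curvature inequality applies to it), and one must control the rescaled model medians, verifying that any fixed finite $K$ washes out in the $\lambda\to0$ limit and leaves exactly the flat Apollonius value. Pinning down the correct inequality direction in the Lorentzian convention is merely bookkeeping, since the symmetric substitution makes the final contradiction insensitive to it.
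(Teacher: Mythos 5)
Your argument is correct and is essentially the paper's own: the paper proves Lemma \ref{Lem_LorUnb} by declaring it ``analogous to the proof of Lemma \ref{lemParRm}'', and your rescaling argument (true median scales linearly while the constant-curvature correction to the model median enters only at higher order in $\lambda$, so the comparison collapses to the flat Apollonius identity) together with the substitution $(u,v)=\bigl(\tfrac12(x+y),\tfrac12(x-y)\bigr)$ to flip the inequality is exactly that argument, transcribed to the Lorentzian convention. The one step you defer --- the explicit expansion of the model median in $\mathcal{S}^-_K$ --- is the same computation the paper carries out on $\frac{1}{\sqrt{k}}S^2$ and transfers via $\cos(ix)=\cosh(x)$, so nothing essential is missing.
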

These results have been formulated in a similarly way in the context of Finsler geometry by Braun and Ohta in \cite{braun2023optimaltransporttimelikelower}.\\
In addition to sectional curvature bounds, also the Gromov-Hausdorff distance was generalised into the Lorentzian setting in \cite{muller2019lorentzianGromovHausdorfAndFiniteness} and \cite{minguzzi2022lorentzian}, measuring how similar two almost Lorentzian pre length spaces are. This definition can be applied to a compactified version of $(\R^{1,1},\tau^p)$, namely the cylinder $Cyl^p\mDef ([0,1]\times S^1,\tau^p)$. As we show in lemma \ref{lem_GHPropOfCyl}, these spaces depend continuously on $p$ and diverge for $p\to\infty$.\\
Using a result from \cite{beran2024nonlineardalembertcomparisontheorem}, we can show that $Cyl^p$ satisfies $TCD(0,2)$. The Ricci curvature can therefore be considered as bounded from below by $0$, the dimension bounded from above by $2$. Since the volume and timelike diameter are also uniformly bounded, $Cyl^p$ shows that these conditions are not enough to ensure Gromov-Hausdorff pre-compactness.\\
Since the $TCD$ condition requires a background measure on the set, we finally raise the question of whether this can be generated using the Lorentzian Hausdorff measure and find that for $p\in (1,2)$, these spaces have a non-integer Hausdorff dimension.

\section{Preliminaries}
Since the time separation function $\tau$ in a spacetime does not satisfy the triangle inequality but instead satisfies a conditional reverse triangle inequality, it cannot be considered a metric. This causes difficulties in the generalization to length spaces since $\tau$ does not give rise to a metric topology.  For example, in a Lorentzian context, "$\epsilon$-balls" do not form small neighbourhoods around a point but rather large, hyperbolic-like shapes with the base point at their tip. Therefore, we cannot define the topology in the usual way based on $\tau$ and have to find some other solution.
The most common formulation, used in \cite{kunzinger2018lorentz}, circumvents this problem by providing not only a time separation function $\tau$ for the space but also an underlying metric $\dd$ which can be used to define the topology:
\begin{definition}
A \textbf{Lorentzian pre-length space} $(X,\dd,\ll, \leq, \tau)$ consists of a set $X$, metric $\dd$ (positive definite, symmetric, and satisfying the triangle inequality), two relations $\ll, \leq\ \subset X\times X$ and a function $\tau: X\times X \to [ 0,\infty]$, such that:\\
$
\begin{array}{ll}
\begin{array}{l}
\bullet\ \leq \text{ is reflexive and transitive (a pre-order)} \\
\bullet\ \ll \text{ is transitive and contained in }\leq
\end{array}
& \bigg\}\ (X,\ll,\leq) \text{ is a causal space}\\
\begin{array}{l}
\bullet\ \ \tau \text{ is lower semi-continuous w.r.t d}\\
\bullet\ \ \tau(x,z) \geq \tau(x,y) + \tau(y,z) \ \ \ \forall\ x\leq y \leq z \in X\\
\bullet\ \ x\nleq y \imply \tau(x,y)=0 \text{ and } \tau(x,y)>0 \eqval x\ll y
\end{array}
& \Bigg \}\ \tau \text{ is a time separation function}\\
\end{array}
$
\end{definition}
This is a generalization of the concept of causal spacetimes, where $\ll, \leq$ and $\tau$ are given by the Lorentz metric $g$. The metric $\dd$ is unnatural. Although it is common practice to equip spacetimes with an auxiliary Riemannian metric (for example in the limit curve theorem, see \cite{GLG} Lemma 14.2), there is no canonical way as there cannot exist an injective, topology-preserving functor from spacetimes to metric spaces (see \cite{muller2019lorentzianGromovHausdorfAndFiniteness}).\\
One possible simplification is defined by Minguzzi and Suhr in \cite{minguzzi2022lorentzian}.
They avoid the necessity of a metric for their definition of bounded Lorentzian metric spaces.
To impose some kind of regularity on the time function $\tau$, they require $\tau$ to ensure the existence of a 'suitable' topology:
\begin{definition}
A \textbf{bounded Lorentzian-metric space} $(X,\tau)$ is a set $X$ and a function $\tau: X\times X \to [0,\infty)$ satisfying the following properties:\\
$ 
\begin{array}{l}
\bullet\ \ \tau(x,z) \geq \tau(x,y) + \tau(y,z) \ \ \ \forall\ x,y,z \in X \pp \tau(x,y)>0 \text{ and } \tau(y,z)>0 \\
\bullet\ \ \tau\text{ distinguishes points which means that for every } x\neq y \in X \ \text{ there exists } z \in X \text{ such that } \\ \hspace{4.5mm} \tau(x,z)\neq\tau(y,z)\text{ or } \tau(z,x) \neq \tau(z,y)\\
\bullet\ \ \text{There exists a topology $T$ on $X$ such that $\tau:X\times X\to \R$ is continuous and for every }\epsilon>0 \pp \\ \hspace{4.5mm} \{(x,y)\in X\times X\pp \tau(x,y) \geq\epsilon\} \text{ is compact}
\end{array}
$
\end{definition}
A limitation of this definition is that it requires the space to be bounded. Since spacetimes are usually not compact—otherwise, they would have singularities or be non-causal—this definition covers only bounded subsets of spacetimes.
In a second paper \cite{bykov2024lorentzianmetricspacesghconvergence}, the authors build upon this definition and defined unbounded Lorentzian metric spaces to solve these issues by requiring that the space is locally (in every causal diamond) a bounded Lorentzian metric space.\\
Without requiring that the set $\{(x,y)\in X\times X\pp \tau(x,y) \geq\epsilon\}$ be compact one could always choose the discrete topology, making $\tau$ naturally continuous.
Another possible definition is given by the Lorentzian space in \cite{muller2024maximality}:
\begin{definition}
\label{defLorSp}
    A \textbf{Lorentzian space} $(X,\dd,\ll,\leq,\tau,T)$ is a set $X$, endowed with two relation $\ll$ and $\leq$, a time separation function $\tau:X\times X\to \R$, a topology $T$ and a function\\ $\dd:C(X)\to [0,\infty]^{X\times X}$ (where $C(X)$ is the set of all $T$-closed subsets of $X$) such that:\\
    $\begin{array}{l}
         \bullet\ \ (X,\ll,\leq) \text{ is a causal space}\\
         \bullet\ \ \tau \text{ is $T$ - lower semicontinuous}\\
         \bullet\ \ \forall x,y,x\pp x\leq y\leq z \implies \tau(x,z) \geq \tau(x,y) + \tau(y,z)\\
         \bullet\ \ x\nleq y \implies \tau(x,y)=0 \text{ and } \tau(x,y)>0 \iff x\ll y\\
         \bullet\ \ \forall \text{temporally compact set }U\in C(X) \implies \left.\dd_I\right|_{int(U)\times int(U)} \text{ is a finite metric and generates }T\\
         \bullet\ \ \forall x,y\in X \pp x\leq y (x\ll y) \exists \text{ a causal (timelike) curve } \gamma:x \rightlsquigarrow y\\
         \bullet\ \ \forall x\in X\exists \text{a nbh. }x\in U\in C(X) \text{ and } C>0 \pp \text{every causal curve in $U$ has }\dd_U \text{-length }<C\\
         \bullet \ \ \forall x\in X \exists \text{a nbh. } x\in U\in C(X) \pp \leq \cap (U\times U) \text{is closed in } U\times U
    \end{array}$\\
    A curve $\gamma:[a,b]\to X$ is called causal (timelike) in this definition if there exists a neighborhood $U\in C(X)$ of $\gamma([a,b])$ such that $\gamma$ is $\dd_U$ Lipschitz and for all $a\leq t_1<t_2\leq b$ we have $\gamma(t_1)\leq\gamma(t_2)$.
\end{definition}
The common denominator for the definitions above is a space which just requires the inverse triangle inequality. It was defined by Müller in \cite{muller2022gromovMetricAndDimension} as the \textbf{almost Lorentzian pre-length space}.
\begin{definition}
An \textbf{almost Lorentzian pre-length space} is a duple $(X,\tau)$ where $X$ is a set and $\tau: X\times X\to \R$ is a function which is:\\
$\begin{array}{l}
\bullet\ \ \text{antisymmetric}\\
\bullet\ \ \text{satisfies }\tau(x,z) \geq \tau(x,y) + \tau(y,z) \ \ \ \forall\ x,y,z \in X\pp \tau(x,y)>0 \text{ and }\tau(y,z)>0\\
\end{array}$
\end{definition}
The last definition we want to introduce is the \textbf{bare Lorentzian space} from \cite{muller2024maximality}. This combines the idea of an almost Lorentzian pre-length space and a Lorentzian space in the sense that it needs only the data of an almost Lorentzian pre-length space $(X,\tau)$, but require that it can be mapped by a certain functor (see Theorem 1 in \cite{muller2024maximality}) to a Lorentzian space. The idea of this functor is to provide the required relations, topology and metric (on every closed set $U\in C(X)$) completely by $\tau$. The local metric, which also defines the Lipschitz class on $U\in C(X)$, is for example given by the Noldus$^2$ metric, defined by $\dd^{N^2} (x,y) \mDef \sup_{z\in U}\{\tau(x,z)^2 - \tau(y,z)^2|\}$ and the relations by $x\ll y\iff \tau(x,y)>0$ and $x\leq y\iff I^+(y)\subset I^+(x)$ and $I^-(x)\subset I^-(y)$.
\\
Note that the convention for the sign of $\tau$ is not consistent throughout the different definitions. Some require it to be antisymmetric, others to be non negative. By taking the antisymmetrical version $\tau^a(x,y) = \tau(x,y) - \tau(y,x)$ of the non negative version, or the non negative version $\tau^{\geq 0}(x,y) = \sup\{\tau(x,y),0\}$ of the antisymmetric, it is clear that this difference is merely a notational inconvenience but makes no difference.\\
We can also require more regularity than in a Lorentzian pre-length space, for example, in the form of a \textbf{Lorentzian length space} (see Definition 3.22 of \cite{kunzinger2018lorentz}). One of the differences is that we require the time separation function to be intrinsic, meaning that the supremum of the length of causal curves between two points equals their time separation. \\
This is analogous to a length space, which is a metric space where the metric is intrinsic (meaning the distance of two points equals the infimum of length of curves, connecting these points.)\\
The interesting aspect of these spaces is that they allow for the notion of sectional curvature bounds. Before we introduce them, we will briefly introduce the Gromov-Hausdorff distance of almost Lorentzian pre-length spaces (and thereby also all the others).

\subsection{Gromov-Hausdorff distance}
The Gromov-Hausdorff (GH) distance is a notion known for some time in the setting of metric spaces and measures how far two metric spaces are from being isometric.
Recently, Müller \cite{muller2022gromovMetricAndDimension} and Minguzzi \& Suhr \cite{minguzzi2022lorentzian} independently extended the concept of Gromov-Hausdorff distance to the Lorentzian setting:
\begin{definition}
The \textbf{Gromov-Hausdorff distance} of two almost Lorentzian pre-length spaces $(X,\tau_X)$,$(Y,\tau_Y)$ is:
\begin{equation*}
d_{GH}^-((X,\tau_X),(Y,\tau_Y)) \coloneqq \frac{1}{2} inf\{dist(R)\ : \ R \in \text{Corr}(X,Y)\}
\end{equation*}
The infimum is taken over all correspondences $R$, which are all relations $\text{R}\subset X\times Y$ such that $\text{pr}_1(\text{R})=X$ and $\text{pr}_2(\text{R})=Y$, where $\text{pr}_i$ is the projection to the $i$'th component.\\
The distortion of the correspondence is defined by:
\begin{equation*}
\text{dist}(R) \coloneqq sup\{|\tau_X(x,x') - \tau_Y(y,y')|\ : \ (x,y),(x',y') \in \R\}
\end{equation*}
\end{definition}
The GH distance of two isometric spaces is zero and the triangle inequality holds naturally. Considering isometry equivalence classes of bounded Lorentzian metric space, positivity has been established in \cite{minguzzi2022lorentzian}, making $\dd_{GH}^-$ a metric.
\subsection{Sectional Curvature}
The notion of (timelike) sectional curvature bounds of Riemannian (Lorentzian) manifolds can be extended to the setting of (Lorentzian) length spaces using triangle comparison theorems \cite{Burago2001} (\cite{beran2023curvature}). 
To define sectional curvature bounds we use comparison spaces (two-dimensional spaces of constant sectional curvature $K\in \R$). In the Riemannian case, we denote them by $(\mathcal{S}^+_k,\Bar{\dd})$ (which are scaled versions of the sphere for $K>0$, the Euclidean plane for $K=0$ or the hyperbolic space for $K<0$) and in the Lorentzian case as $(\mathcal{S}^-_k,\Bar{\tau})$ (scaled version of the de-Sitter space, Minkowski space or anti de-Sitter space).
If we speak of comparison elements in what follows, we mean that we reconstruct some geometric object from a (Lorentzian) length space in a comparison space based on the distances. For example, a comparison triangle of a triangle $abc \subset X$ is a triangle in the comparison space $\Bar{a}\Bar{b}\Bar{c}\subset \mathcal{S}^\pm_k$ such that all side lengths are alike. A comparison point to a point on one of the edges (a distance realizers between two vertices) is defined by having equal distance to the two neighboring vertices. (Here, we have assumed that the metric or time separation is strictly intrinsic, meaning that a distance realizer between points exist. Without this assumption, the definition becomes a bit more technical.)
\begin{definition}
A length space $(X,\dd)$ has a sectional curvature bound below (above) by $K\in\R$, iff for every point $p\in X$ there exists a neighborhood $p \in U \subset X$, such that for any triangle $abc \subset U$ and any point $M \in bc$, for its corresponding comparison triangle $\Bar{a}\Bar{b}\Bar{c}\subset \mathcal{S}^+_k$ and point $\Bar{M}\in \Bar{b}\Bar{c}$ we find that $\dd(a,M) \geq \Bar{\dd}(\Bar{a}\Bar{M})$ $(\leq)$.
\end{definition}
The idea in the Lorentzian setting is the same except that we have to demand the triangle to be timelike ($a\ll b\ll c$) in order to be able to measure the time separation of the edges. The time separation from one vertex to the opposite edge is then compared with the distance in a comparison triangle in $\mathcal{S}^-_k$. The formal definition, which includes some additional technical requirements, can be found in Definition 3.2 of \cite{beran2023curvature}.

\subsection{Ricci Curvature, Dimension and Volume}
In the setting of metric measure spaces, there exists synthetic versions of Ricci curvature bounds, called curvature dimension condition $CD(K,N)$. This condition determines the curvature by looking at convexity properties of the entropy of the optimal transports of probability measures (\cite{Villani2009}).
Recently, $CD$-conditions have been generalized to the Lorentzian framework in form of the timelike curvature dimension condition $TCD(K,N)$ \cite{Mondino_2022}, \cite{cavalletti2023optimaltransportlorentziansynthetic}. We will not explain the details here but refer to the sources named above for insights.\\
To calculate $CD$ or $TCD$, we need a background measure on a (Lorentzian) metric spaces. One way to define a measure on bare Lorentzian pre-length spaces is the Lorentzian Hausdorff measure. This has been defined by McCann and Sämann in \cite{McCann_2022} and in an almost identical way by Müller in \cite{M_ller_2023}:
\begin{definition}
    For $N\in[0,\infty)$, the $N$-dimensional Hausdorff volume of a causal diamond $J(p,q)$ for $p\ll q$ is defined by 
    \begin{equation*}
        \rho_N (J(p,q)) \mDef \omega_N \tau(p,q)^N \qquad \qquad \omega_N \mDef\frac{\pi^{\frac{N-1}{2}}}{N \Gamma(\frac{N+1}{2}) 2^{N-1}}
    \end{equation*}
    where $\Gamma(N)$ is Euler's gamma function.\\
    For a Borel measurable set $A\subset X$ and $\delta>0$, we define
    \begin{equation*}
        \nu_\delta^N(A)\mDef \inf\{\sum_{i=0}^\infty \rho_N(J(p_i,q_i)) \pp p_i\leq q_i\in X, \text{diam}_\dd(J(p_i,q_i))<\delta \text{ and } A\subset \bigcup_{i=1}^\infty J(p_i,q_i)\}
    \end{equation*}
    The Lorentzian $N$-dimensional Hausdorff measure is defined by looking at the limit $\delta \to 0$ (which is well defined since a smaller delta restricts the allowed coverings from the definition above, making $\nu_\delta^N(A)$ monotonically increasing in $\delta$).
    \begin{equation*}
        \nu^N(A) \mDef \sup_{\delta>0} \nu_\delta^N(A)
    \end{equation*}
    The geometric Lorentzian Hausdorff dimension of $A$ is 
    \begin{equation*}
        \dim^\tau(A)\mDef \inf\{N\geq 0\pp \nu^N(A)<\infty\}.
    \end{equation*}
\end{definition}

\section{Lorentzian Analogue of Finite Dimensional \texorpdfstring{$L^p$}{TEXT} Spaces}
In the following, we transfer the idea of finite dimensional $L^p$ spaces to the Lorentzian setting to obtain an example that yields many insights into properties of the definitions given above.
\begin{definition}
For $1\leq p\leq \infty$ and $x,y\in\R^{1,n}$, we define the non-negative, homogeneous function
$$|x|^p \mDef
\begin{cases}
\left(|x_0|^p - \sum_{i=1}^n |x_i|^p \right)^{1/p} & |x_0|^p \geq\sum_{i=1}^n |x_i|^p \text{ and } x_0>0\\
0 & \text{ else } \\
\end{cases}$$
The corresponding time distance function is $\tau^p(x,y)\mDef |y-x|^p$.\\
The relations $\ll$ and $\leq$ are inherited from Minkowski space.\\
For $\tau^\infty$ we take the point-wise limit $p \to \infty$ of $\tau^p$.
\end{definition}
In the following, we will restrict our discussion to the two dimensional case. 
First, we will check in which categories of 'Lorentzian spaces' $(\R^{1,1},\tau^p)$ is.
We start by proving the foundation, namely the inverse triangle inequality:
\begin{lemma}
For all $x,y\in\R^{1,1}$ with $x, y \gg 0$, the reverse triangle inequality \\$|x+y|^p \geq |x|^p+|y|^p$ holds for all $p\in [1,\infty]$.\\
Thereby, $(\R^{1,1},\tau^p)$ is an almost Lorentzian pre-length space.
\end{lemma}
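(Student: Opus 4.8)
The plan is to reduce the claim to the classical reverse Minkowski inequality and then prove the latter by a concavity argument. Write $x=(x_0,x_1)$ and $y=(y_0,y_1)$. Since $x,y\gg 0$, both lie in the open future timelike cone $\{(a_0,a_1):a_0>|a_1|\}$, which is a convex cone; hence $x+y\gg 0$ as well, and all three norms are computed by the first branch of the definition. Introduce the auxiliary function $N(s,t)\mDef\of{s^p-t^p}^{1/p}$ on the closed convex cone $C\mDef\{(s,t):s\geq t\geq 0\}$, so that $|x|^p=N(x_0,|x_1|)$ and analogously for $y$ and $x+y$.

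First I would dispose of the space components. Because $N$ is decreasing in its second argument and the ordinary triangle inequality gives $|x_1+y_1|\leq|x_1|+|y_1|$, we have
\[
|x+y|^p=N(x_0+y_0,|x_1+y_1|)\ \geq\ N(x_0+y_0,\,|x_1|+|y_1|).
\]
It therefore suffices to prove the superadditivity estimate $N(u+v)\geq N(u)+N(v)$ for the points $u=(x_0,|x_1|)$ and $v=(y_0,|y_1|)$ of $C$ (whose sum again lies in $C$).

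The key step is that $N$ is concave on $C$, and this is where the main obstacle lies. Since $N$ is positively homogeneous of degree one, writing $N(s,t)=s\,g(t/s)$ with $g(r)\mDef\of{1-r^p}^{1/p}$ exhibits $N$ as the perspective of $g$, so $N$ is concave as soon as $g$ is concave on $[0,1]$. Concavity of $g$ is in turn equivalent to convexity of its hypograph $\{(r,y):0\leq y\leq g(r)\}$, which is exactly the first-quadrant part of the $\ell^p$ unit ball $\{r^p+y^p\leq 1\}$, and this is convex precisely when $p\geq 1$. Once concavity of $N$ is in hand, homogeneity of degree one promotes it to superadditivity via $N(u+v)=2\,N\of{\tfrac{u+v}{2}}\geq 2\cdot\tfrac12\of{N(u)+N(v)}=N(u)+N(v)$, which combined with the previous paragraph proves the inequality for $1<p<\infty$. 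The perspective route is what lets me bypass an unpleasant Hessian computation; the only thing to watch is that $g$ genuinely fails to be concave for $p<1$, so the hypothesis $p\geq 1$ is used in an essential way.

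Finally I would check the two endpoints by hand. For $p=1$ the inequality reads $(x_0+y_0)-|x_1+y_1|\geq(x_0-|x_1|)+(y_0-|y_1|)$, which is again just the triangle inequality for $|\cdot|$. For $p=\infty$, strict timelikeness gives $\of{s_0^p-|s_1|^p}^{1/p}\to s_0$ as $p\to\infty$, so $|x+y|^\infty=x_0+y_0=|x|^\infty+|y|^\infty$, with equality. The concluding clause is then immediate: antisymmetry of $\tau^p$ is built into the definition, and the reverse triangle inequality just established is exactly the second defining property, so $(\R^{1,1},\tau^p)$ is an almost Lorentzian pre-length space.
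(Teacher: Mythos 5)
Your proof is correct, and it takes a genuinely different route from the paper's. The paper scales $x,y$ so that $|x|^p=\lambda$, $|y|^p=1-\lambda$, reduces the claim to concavity of $|\cdot|^p$ along the unit "hyperboloid" $\{|x|^p=1\}$, and then verifies this by writing that level set as a graph $x_0=(1+|x_1|^p)^{1/p}$ and computing $\partial^2 x_0/\partial x_1^2>0$ explicitly, with a separate one-sided-derivative argument at the kink $x_1=0$. You instead first strip off the spatial components with the ordinary triangle inequality and monotonicity of $N(s,t)=(s^p-t^p)^{1/p}$ in $t$, then obtain concavity of the $1$-homogeneous function $N$ on the cone as the perspective of $g(r)=(1-r^p)^{1/p}$, whose concavity you read off from convexity of the first-quadrant piece of the planar $\ell^p$ ball; superadditivity then follows from the standard "concave plus positively $1$-homogeneous" argument. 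What each approach buys: the paper's computation is entirely self-contained (it does not presuppose convexity of the $\ell^p$ ball, which is itself the classical Minkowski inequality in disguise), while yours avoids the Hessian computation and the case distinction at $x_1=0$, makes the role of the hypothesis $p\geq 1$ transparent, and extends verbatim to $\R^{1,n}$ (where the spatial reduction step uses the $n$-dimensional Minkowski inequality), which matters since the paper's definition of $|\cdot|^p$ is stated in that generality. Two small points worth making explicit if you write this up: the deduction "convex hypograph restricted to $y\geq 0$ implies concave $g$" uses $g\geq 0$ on $[0,1]$, and the final clause about the almost Lorentzian pre-length space should acknowledge, as the paper does in a remark, that the $\tau^p$ of the definition is nonnegative rather than antisymmetric, so one passes to the antisymmetrization $\tau^p(x,y)-\tau^p(y,x)$ to match that definition literally.
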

\begin{proof}
Using the homogeneity of $|\cdot|^p$, we can scale $x,y$ such that we can assume $|x|^p=\lambda$ and $|y|^p=1-\lambda$. Because we assumed $0\ll x,y$, we can assume that we can choose $\lambda\in(0,1)$.
The inequality to prove thereby reads $ |x+y|^p \geq \lambda+ 1-\lambda =1$.
By scaling $x,y$ to unit length, $\hat{x}\mDef\frac{x}{\lambda}$ and $\hat{y}\mDef\frac{y}{1-\lambda}$ we have to show the concavity of $|\cdot|^p$ on the unit hyperboloid: 
\begin{equation*}
    |\lambda\hat{x} + (1-\lambda)\hat{y})|^p \geq 1 = \lambda |\hat{x}|^p + (1-\lambda) |\hat{y}|^p \text{ on }\{ x\in\R^{1,1} \pp |x|^p=1 \}
\end{equation*}
Solving $\left(|x_0|^p-|x_1|^p\right)^{1/p}\overset{!}{=}1$ for $x_0$ gives $x_0 = \left( 1+|x_1|^p \right)^{1/p}$.
For any $x_1\neq 0$, we can take the second derivative w.r.t. $x_1$ and obtain
$\frac{\partial^2 x_0}{\partial x_1^2} = (p-1) \left| x_0\right| {}^{p-2} \left(\left| x_0\right| {}^p+1\right){}^{\frac{1}{p}-2} > 0$. Together with the continuity at $x_1=0$,
the fact that $\lim_{x_1 \nearrow 0} \frac{\partial x_0}{\partial x_1} \leq \lim_{x_1 \searrow 0} \frac{\partial x_0}{\partial x_1}$ and that $|x|^p$ is monotonically increasing in $x_0$ proves the convexity of $|\cdot|^p$.
    Hence, $|\cdot|^p$ satisfies the inverse triangle inequality and thereby also $\tau^p$, which makes $(\R^{1,1},\tau^{p})$ an almost Lorentzian pre-length space. 
\end{proof}
Next, we prove the much stronger requirements for a Lorentzian (pre-)length space:
\begin{lemma}
\label{lem_isLorLenSP}
$(\R^{1,1},\dd,\ll,\leq,\tau^p)$, where $\dd$ is the the Euclidean metric on $\R^2$, is a Lorentzian length space for any $p\in [0,\infty)$ and a Lorentzian pre-length space for $p=\infty$.
\end{lemma}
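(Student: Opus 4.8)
The plan is to verify the three defining axioms of a Lorentzian pre-length space for every $p\in[1,\infty]$ first, and then to upgrade this to a length space for finite $p$, isolating the single point at which $p=\infty$ breaks down. The causal-space axioms come for free, since $\ll$ and $\leq$ are by definition the relations inherited from Minkowski space $\R^{1,1}$, so $\of{\R^{1,1},\ll,\leq}$ is a causal space. Compatibility of $\tau^p$ with these relations reduces to the observation that $|x_0|^p=|x_1|^p \iff |x_0|=|x_1|$, so the zero set of $|\cdot|^p$ is exactly the complement of the open Minkowski future cone; hence $\tau^p(x,y)>0 \iff x\ll y$ and $x\nleq y \imply \tau^p(x,y)=0$. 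The reverse triangle inequality for $x\leq y\leq z$ follows from the previous lemma applied to $u\mDef y-x$ and $v\mDef z-y$ (so that $z-x=u+v$) in the timelike case $x\ll y\ll z$, and extends to the null-boundary case by continuity of $|\cdot|^p$ together with homogeneity. Finally, lower semicontinuity with respect to the Euclidean metric $\dd$ holds because for finite $p$ the map $|\cdot|^p$ is in fact continuous on all of $\R^{1,1}$ (inside the cone it is a composition of continuous maps, and $|x_0|^p-|x_1|^p\to 0$ as one approaches the cone from inside), while for $p=\infty$ one has $|x|^\infty=|x_0|$ strictly inside the cone and $0$ on and outside it, which is merely lower semicontinuous. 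This already yields a Lorentzian pre-length space for every $p\in[1,\infty]$.

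To promote this to a Lorentzian length space for finite $p$, I would verify causal path-connectedness, intrinsicness, and localizability (Def. 3.22 of \cite{kunzinger2018lorentz}). Causal path-connectedness is clear: the affine segment $\gamma(t)=x+t(y-x)$, $t\in[0,1]$, is $\dd$-Lipschitz and future-directed causal whenever $x\leq y$. The heart of the argument is intrinsicness. Using homogeneity, along the straight segment every partition sum equals $\sum_i (t_{i+1}-t_i)\,|y-x|^p=|y-x|^p=\tau^p(x,y)$, so the straight segment has $\tau$-length exactly $\tau^p(x,y)$; conversely, iterating the reverse triangle inequality shows every causal curve from $x$ to $y$ has $\tau$-length at most $\tau^p(x,y)$. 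Hence the supremum of $\tau$-lengths over causal curves equals $\tau^p(x,y)$ and is realized by the straight segment, which is intrinsicness and simultaneously exhibits distance maximizers. For localizability I would take bounded causal diamonds as localizing neighborhoods: boundedness of $\tau^p$ on bounded sets together with the reverse triangle inequality bounds the $\tau$-length of every causal curve in such a neighborhood, and continuity of $\tau^p$ supplies the continuous localizing function required by the definition.

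The main obstacle, and the place where the dichotomy in the statement originates, is precisely this last continuity requirement. For finite $p$ the function $\tau^p$ is globally continuous, so all localizability data are available and the space is a genuine length space. For $p=\infty$, however, $\tau^\infty$ jumps from $|x_0|$ to $0$ across the light cone and is only lower semicontinuous; localizability then fails because no continuous localizing function can agree with the discontinuous $\tau^\infty$ on a causal diamond, so the space remains only a pre-length space. I therefore expect the bulk of the careful work to lie in (i) the limiting argument that carries the reverse triangle inequality from the open timelike region out to the null boundary, and (ii) checking that the straight segment is genuinely a maximizing causal curve in the precise sense of the length-space definition, with the continuity-versus-discontinuity analysis of $\tau^p$ being exactly what cleanly separates the finite and infinite cases.
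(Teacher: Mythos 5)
Your proposal is correct and follows essentially the same route as the paper: continuity (resp.\ lower semicontinuity for $p=\infty$) of $\tau^p$, the reverse triangle inequality from the preceding lemma, intrinsicness via straight segments being maximizers by homogeneity, and the remaining length-space conditions from the ambient Minkowski structure, with localizability as the sole failure at $p=\infty$. The only cosmetic difference is that the paper pins down that failure by invoking Proposition~3.17 of \cite{kunzinger2018lorentz} (non-upper-semicontinuity of $L_{\tau^\infty}$, witnessed by $\gamma_s(t)=(t,ts)$ as $s\to 1$), whereas you argue directly that no continuous localizing function can exist; both are valid.
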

\begin{proof}
$\tau^p =\begin{cases}
\left( |y_0 - x_0|^p - |y_1 - x_1| ^p \right)^{1/p} & (y_0 - x_0) \geq |y_1 - x_1| \\
    0 & \text{ else }
        \end{cases} $
    is continuous for any $p\in[0,\infty)$ as a function $\tau\pp \R^4\to\R$. For $p=\infty$, the pointwise limit of the distance is
    \begin{equation*}
        \tau^\infty (x,y) = \begin{cases}
        y_0 - x_0 & y_0 - x_0 > |y_1 - x_1|\\
        0 & y_0 - x_0 \leq |y_1 - x_1|
        \end{cases}
        \end{equation*}
    $\tau^\infty$ is not continuous, but lower semi-continuous because the upper case in the definition of $\tau^\infty$ is defined by an open relation.\\
    Together with the previous lemma and the fact that $(\leq,\ll)$ are compatible with $\tau^p$, we have shown that $(\R^{1,1},\tau^p)$ is a Lorentzian pre-length space. To show that this space is also a Lorentzian length space (see Definition 3.22 in \cite{kunzinger2018lorentz}) we first note that the reverse triangle inequality can be used to argue that lines are distance realizers, meaning the longest causal curves between any two causal points. Moreover, the length of a line equals the distance of its endpoints (this follows from the homogeneity of $|\cdot|^p$). Hence, $\tau^p$ is intrinsic.
The remaining requirements for a Lorentzian length-space, which are\\
    \null\hspace{1cm} 1) locally causally closed\\
    \null\hspace{1cm} 2) causally path connected\\
    \null\hspace{1cm} 3) localizable\\
    are all satisfied for all $p\in [1,\infty)$ (by arguing with the given structure of Minkowski space and the continuity of $\tau^p$). For $p=\infty$ only the localizability is not satisfied. This can be seen using proposition 3.17 from \cite{kunzinger2018lorentz}, since $L_{\tau^\infty}$ is not upper semi-continuous (consider for example $\gamma_s(t)\mDef (t, ts)$ for $s\to 1$.)
\end{proof}
\begin{lemma}
    $(\R^{1,1},\dd,\ll,\leq,\tau^p,T)$ is a Lorentzian space for $p\in[1,\infty]$ with $T$ being the metric topology from $\R^2$ and $\dd$ being the Euclidean metric on $\R^2$ for all $U\in C(X)$.
\end{lemma}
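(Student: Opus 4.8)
The plan is to verify the eight defining properties of Definition \ref{defLorSp} one by one, exploiting the fact that the relations $\ll,\leq$, the topology $T$, and the assignment $\dd$ are all inherited from (or built on) the standard Minkowski/Euclidean structure on $\R^2$ and therefore do not depend on $p$, whereas the only $p$-dependent datum, the time separation $\tau^p$, has already been controlled for all $p\in[1,\infty]$. Concretely, $(X,\ll,\leq)$ is a causal space because $\ll,\leq$ are exactly the Minkowski relations; $T$-lower semicontinuity of $\tau^p$ and the reverse triangle inequality were established in Lemma \ref{lem_isLorLenSP} and the preceding lemma; the compatibility $x\nleq y\Rightarrow\tau^p(x,y)=0$ together with $\tau^p(x,y)>0\Leftrightarrow x\ll y$ is again inherited from Minkowski space; and causal (timelike) path-connectedness holds because the straight segment from $x$ to $y$ is a Euclidean-Lipschitz, $\leq$-monotone curve whenever $x\leq y$ (this is precisely the distance realizer already used in Lemma \ref{lem_isLorLenSP}). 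The first four bullets and the sixth are thus immediate.

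For the conditions involving the metric $\dd_U$ I use that $\dd_U$ equals the Euclidean metric for every closed $U$. The fifth bullet is then immediate: the Euclidean metric is finite and generates the standard topology, so its restriction to $\mathrm{int}(U)\times\mathrm{int}(U)$ is a finite metric generating $T|_{\mathrm{int}(U)}$, for any (in particular any temporally compact) $U\in C(X)$. For local causal closedness (the eighth bullet) I take $U$ to be a closed Euclidean ball around $x$; since the Minkowski causal relation $\{(x,y): y_0-x_0\geq \abs{y_1-x_1}\}$ is closed in $\R^2\times\R^2$, its intersection with $U\times U$ is closed in $U\times U$.

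The only bullet requiring a genuine argument is the uniform length bound (the seventh), and this is where I expect the main, though still elementary, work. Fix $x$ and let $U$ be a closed ball around it. A causal curve $\gamma$ in $U$ is Euclidean-Lipschitz and satisfies $\gamma(t_1)\leq\gamma(t_2)$ for $t_1<t_2$, which in Minkowski terms forces the time coordinate $x_0\circ\gamma$ to be non-decreasing and the spatial increment to be dominated by the time increment, $\abs{x_1(t_2)-x_1(t_1)}\leq x_0(t_2)-x_0(t_1)$. Summing over a partition shows the total variation of the spatial coordinate is bounded by that of the time coordinate, which in turn is at most $\mathrm{diam}_\dd(U)$; hence the Euclidean length of $\gamma$ is at most $2\,\mathrm{diam}_\dd(U)$, and I may take $C=2\,\mathrm{diam}_\dd(U)+1$. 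Notably this estimate uses only the Minkowski causal structure and the Euclidean metric, so it is uniform in $p$; in particular the argument goes through unchanged for $p=\infty$, where the space failed to be a Lorentzian \emph{length} space (localizability breaks down in Lemma \ref{lem_isLorLenSP}) but still satisfies the weaker axioms of a Lorentzian space, since those do not demand localizability.
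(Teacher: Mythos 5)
Your proposal is correct and takes essentially the same route as the paper, whose proof is a one-line remark that every axiom not involving $\tau^p$ holds because the underlying causal, topological and metric structure is just that of Minkowski space, while the $\tau^p$-dependent axioms were established in the preceding lemmas. You simply make explicit the verifications (in particular the uniform length bound via monotonicity of the time coordinate) that the paper treats as immediate.
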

\begin{proof}
     All the requirements for a Lorentzian space are satisfied naturally (since everything not involving $\tau^p$ is equivalent to the Minkowski space) or have already been shown.
\end{proof}
The space $(\R^{1,1},\tau^p)$ can also be used to show that not every Lorentzian space is also a bare Lorentzian space, although the latter requires much less data. The problem is that, given the functor $C$ from theorem 1 of \cite{muller2024maximality}, the local metric we get is not the same as in the lemma above and does not lie in the same Lipschitz class:
\begin{lemma}
    The space $(\R^{1,1},\tau^p)$ is not a bare Lorentzian space for $p>2$.
\end{lemma}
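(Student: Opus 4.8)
The plan is to show that the functor $C$ of \cite{muller2024maximality}, applied to $(\R^{1,1},\tau^p)$, fails to land in the category of Lorentzian spaces, because the local metric it manufactures---the Noldus$^2$ metric $\dd^{N^2}(x,y)=\sup_{z\in U}|\tau^p(x,z)^2-\tau^p(y,z)^2|$---is, for $p>2$, not Lipschitz-equivalent to the Euclidean metric $\dd$ of the previous lemmas but only H\"older-equivalent with an exponent strictly below $1$. Since in Definition \ref{defLorSp} the causal curves are by definition the $\dd_U$-Lipschitz monotone curves, this mismatch destroys the causal structure, and I will extract from it a failure of the causal path-connectedness axiom.

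First I would reduce to a local estimate, using the translation invariance of $\tau^p$ to put $x=0$ and restricting the supremum to a fixed small closed neighborhood $U$ of the origin. The decisive regime is that of probe points $z$ approaching the light cone of $0$, where $\tau^p(0,z)\to 0$. In null coordinates $z=(s+u,\,s-u)$ with $u\geq 0$ small, a Taylor expansion gives $\tau^p(0,z)^p=(s+u)^p-(s-u)^p\approx 2p\,s^{p-1}u$, hence $\tau^p(0,z)^2\approx (p\,s^{p-1})^{2/p}(2u)^{2/p}$. Writing $\delta=y_0-y_1$ for the null component of a small displacement $y$ and $w=2u$, the same expansion yields $\tau^p(y,z)^2\approx (p\,s^{p-1})^{2/p}(w-\delta)^{2/p}$; for $p>2$ the function $w^{2/p}-(w-\delta)^{2/p}$ is decreasing in $w$ and is therefore maximized at $w=\delta$, i.e. when $z$ sits on the light cone of $y$, giving value $\delta^{2/p}$. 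Running the mirror-image computation on the opposite null ray covers every direction of $y$, and the upshot is a uniform lower bound $\dd^{N^2}(0,y)\geq c\,\dd(0,y)^{2/p}$ for small $y$, with $c>0$.

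The conclusion then follows by comparing exponents. For $p>2$ one has $2/p<1$, so any $\dd^{N^2}$-Lipschitz curve $\gamma$ satisfies $\dd(\gamma(t),\gamma(s))\lesssim |t-s|^{p/2}$ with $p/2>1$; a curve of Euclidean H\"older exponent greater than $1$ is constant. Hence the only $\dd^{N^2}$-Lipschitz curves are the constant ones, so there are no non-constant causal curves even between timelike-related points, contradicting the causal path-connectedness requirement of a Lorentzian space. (Equivalently, the straight timelike segments, which are the $\tau^p$-distance realizers and thus the natural causal curves, fail to be $\dd^{N^2}$-Lipschitz and in fact have infinite $\dd^{N^2}$-length.) Since $C$ therefore does not produce a Lorentzian space, $(\R^{1,1},\tau^p)$ is not a bare Lorentzian space for $p>2$. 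The threshold is sharp for the same reason: when $p\leq 2$ the exponent $2/p\geq 1$, the bulk linear contribution to $\dd^{N^2}$ dominates the near-null term, and $\dd^{N^2}$ is genuinely Lipschitz-equivalent to $\dd$.

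The step I expect to be the main obstacle is making the near-light-cone estimate rigorous. The Taylor expansions above are only valid while $s^{p-1}w\gg |y|$, and precisely at the light cone $\tau^p$ degenerates, so I must control the remainder terms, check that the maximizing probe point $z$ truly remains inside the fixed neighborhood $U$ as $y\to 0$ (the maximizer migrates to the light cone but stays at bounded height $s$), and verify that the constant $c$ can be chosen uniformly over all directions of $y$. Upgrading the direction-wise estimate to the clean homogeneous lower bound $\dd^{N^2}(0,y)\gtrsim \dd(0,y)^{2/p}$ is where the real work lies.
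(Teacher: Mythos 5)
Your proposal is correct and follows essentially the same route as the paper: both obtain a lower bound of the form $\dd^{N^2}(x,y)\gtrsim \dd(x,y)^{2/p}$ by probing with a point $z$ on the light cone of one of the two points, and conclude from $2/p<1$ that every $\dd^{N^2}$-Lipschitz curve is constant, so causal path-connectedness fails. The paper simply evaluates at the single explicit probe $z=(\tfrac12,\tfrac12)$ against $x=(0,0)$, $y=(0,\lambda)$ and observes that the resulting lower bound has infinite $\lambda$-derivative at $\lambda=0$, rather than carrying out your Taylor expansion and optimization over $z$; your added attention to uniformity over the direction of $y$ is a refinement the paper's proof glosses over.
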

\begin{proof}
    First, we calculate the Noldus$^2$ metric for $\tau$ inside a causal diamond. We can showcase what happens using the diamond $U=J(p,q)$ for $p=(-1,0)$ and $q=(1,0)$ which is a closed set in the topology defined in \cite{muller2024maximality}.
    The Noldus$^2$ metric is defined by\\
    $\dd^{N^2}(x,y) = \sup_{z\in U} \left\{ |\tau^p(x,z)^2 - \tau^p(y,z)^2| \right\}$ (where $\tau^p$ is the antisymmetric time separation function).
    To show that $\dd^{N^2}$ is not strongly equivalent to the Euclidean metric $\dd$ from $\R^2$, we estimate the distance of $x=(0,0)$ and $y=(0,\lambda)$ from below by picking $z = (\frac{1}{2},\frac{1}{2})\in U$. Since $\tau^p(x,z)=0$, we can write
    \begin{equation*}
            \dd^{N^2} (x,y) \geq \tau^p(y,z)^2 = \frac{1}{4} \left( \frac{1}{2^p} - \left(\frac{1}{2} - \lambda\right)^p\right)^{2/p} = \frac{1}{16}\left( 1 - (1 - 2\lambda)^{p} \right)^{2/p}
    \end{equation*}
    This distance depends on $\lambda$ (and thereby on $\dd$) with infinite steepness for $\lambda\to 0$:
    $$\frac{\partial}{\partial \lambda} \frac{1}{16}\left( 1 - (1 - 2\lambda)^{p} \right)^{2/p} = \frac{1}{4} \left(\underbrace{1-(1-2 \lambda)^p}_{\xrightarrow{\lambda \to 0} 0}\right)^{2/p-1}\underbrace{(1-2 \lambda)^{p-1}}_{\xrightarrow{\lambda \to 0} 1}  \xrightarrow{\lambda \to 0, \ p>2} \infty $$
    The issue here is that every non-constant curve is not Lipschitz continuous, which implies that there are no causal curves (in the sense defined in Definition \ref{defLorSp}). Hence, the space would be completely causally path unconnected. 
\end{proof}
Since $(\R^{1,1},\tau^p)$ is not bounded (compact), it is not a bounded Lorentzian metric space. Therefore, we restrict the space to some interval in time direction and, to eliminate timelike boundaries, glue it to a cylinder $Cyl^p \mDef([0,1]\times S^1,\tau^p)$. For this cylinder we find:
\begin{lemma}
$Cyl^p$ is a bounded Lorentzian metric space for $p\in[1,\infty)$ but not for $p=\infty$.
\end{lemma}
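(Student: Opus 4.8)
The plan is to verify, for $p\in[1,\infty)$, the three defining properties of a bounded Lorentzian-metric space directly, and then to isolate the topology axiom as the single point of failure when $p=\infty$. Throughout I would regard $Cyl^p$ (normalizing $S^1$ to circumference one) as the quotient of $(\R^{1,1},\tau^p)$ by the spatial lattice, so that on the cylinder $\tau^p$ is obtained from the Minkowski version by replacing the spatial separation $|y_1-x_1|$ with the circle distance $d_{S^1}(x_1,y_1)$; equivalently, $\tau^p(x,y)$ is the supremum of $\tau^p(\tilde x,\tilde y)$ over all lifts $\tilde y$ of $y$, which, since $\tau^p$ decreases in the spatial separation, is attained at the minimal-distance representative.

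First I would establish the reverse triangle inequality by lifting. Given $x,y,z$ with $\tau^p(x,y)>0$ and $\tau^p(y,z)>0$, fix a lift $\tilde x$, let $\tilde y$ be the minimal-distance lift relative to $\tilde x$ and $\tilde z$ the minimal-distance lift relative to $\tilde y$, so that $\tau^p(\tilde x,\tilde y)=\tau^p(x,y)$ and $\tau^p(\tilde y,\tilde z)=\tau^p(y,z)$. The reverse triangle inequality already proven in $\R^{1,1}$ gives $\tau^p(\tilde x,\tilde z)\geq \tau^p(\tilde x,\tilde y)+\tau^p(\tilde y,\tilde z)$, and since $\tilde z$ is one admissible lift of $z$ we have $\tau^p(x,z)\geq \tau^p(\tilde x,\tilde z)$; chaining these yields the claim. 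The point-distinguishing axiom is then routine: for $x\neq y$ one exhibits a future or past witness $z$ at which the two time separations differ, using that distinct points have distinct open future cones. For the topology axiom when $p<\infty$, I would take $T$ to be the natural (compact) manifold topology on $[0,1]\times S^1$. Here $\tau^p(x,y)=F\big(y_0-x_0,\,d_{S^1}(x_1,y_1)\big)$, with $F$ the continuous two-variable function underlying the Minkowski case (its continuity for $p<\infty$ is exactly the earlier lemma) and $d_{S^1}$ continuous, so $\tau^p$ is jointly continuous. Since $X\times X$ is compact, each level set $\{\tau^p\geq\epsilon\}$ is closed in a compact space and hence compact, completing the three axioms.

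The genuinely delicate part, and the main obstacle, is the failure for $p=\infty$. The earlier computation shows $\tau^\infty$ is only lower semicontinuous, with a downward jump across each future light cone; concretely, taking $x=(0,0)$ and $y_n=(\tfrac12,\tfrac12-\tfrac1n)\to(\tfrac12,\tfrac12)=:y_*$ gives $\tau^\infty(x,y_n)=\tfrac12$ while $\tau^\infty(x,y_*)=0$, so $\tau^\infty$ is discontinuous and $\{\tau^\infty\geq\tfrac12\}$ is not closed in the natural topology. To conclude I must rule out \emph{every} topology, not just the natural one: a topology restoring continuity has to separate each such $y_n$ from its natural limit, hence be strictly finer than the natural topology along the light cones, whereas compactness of all the sets $\{\tau^\infty\geq\epsilon\}$ pushes in the opposite direction, and a purely local argument does not obviously forbid an exotic topology that quietly redirects the limit of $(x,y_n)$ to some other pair with time separation $\tfrac12$.

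I expect the cleanest way to close this gap is to invoke the uniqueness (and compact Hausdorff character) of the topology of a bounded Lorentzian-metric space established by Minguzzi and Suhr in \cite{minguzzi2022lorentzian}: any admissible $T$ must coincide with the natural topology on $Cyl^\infty$, under which $\tau^\infty$ is manifestly not continuous, so no admissible topology can exist. Making this reduction airtight — upgrading the observed discontinuity in the natural topology to the nonexistence of \emph{any} admissible topology — is the step on which I would spend the most care.
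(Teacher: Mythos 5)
Your treatment of the $p<\infty$ case matches the paper: the manifold topology on $[0,1]\times S^1$ makes $\tau^p$ continuous, and each set $\{\tau^p\geq\epsilon\}$ is closed in the compact product, hence compact; the point-distinguishing axiom and the reverse triangle inequality (which you check more carefully than the paper, via lifting) are fine. The problem is the $p=\infty$ case, where you yourself flag the gap and then propose a fix that does not work. You want to conclude that \emph{any} admissible topology $T$ must coincide with the natural manifold topology by appealing to the uniqueness of the topology of a bounded Lorentzian-metric space in \cite{minguzzi2022lorentzian}. But that uniqueness statement only constrains topologies that actually satisfy the axioms; it cannot force an admissible $T$ to equal the manifold topology, because the manifold topology is \emph{not} admissible for $p=\infty$ (that is exactly your own observation that $\tau^\infty$ is discontinuous in it). So the reduction is circular: you would be using a theorem about admissible topologies to identify $T$ with a topology you have already shown to be inadmissible. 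To make this route work you would instead have to construct the canonical candidate topology directly from $\tau^\infty$ and show that it violates one of the axioms, which you have not done.

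The paper closes the gap with a direct contradiction valid for an arbitrary admissible $T$, and no uniqueness theorem is needed. Fix a point $(1,\phi)$ on the future boundary. On one hand, continuity of $\tau^\infty$ in $T$ forces the singleton $\{(1,\phi)\}$ to be $T$-open, because it can be written as a finite intersection of preimages of the open set $[0,\tfrac12)$ under the maps $y\mapsto\tau^\infty\of{(0,\phi\pm1),y}$, exploiting the fact that for $\tau^\infty$ the timelike future of a point is sharply cut off along the light cone. On the other hand, an open singleton on the future boundary destroys compactness of the sets $\{\tau^\infty\geq\epsilon\}$: a sequence of pairs whose second entries approach $(1,\phi)$ without hitting it can have no $T$-convergent subsequence. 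These two conclusions contradict each other for every candidate $T$ simultaneously, which is precisely the quantifier over exotic topologies that your argument leaves open. I recommend replacing your appeal to uniqueness with an argument of this shape.
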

\begin{proof}
First we observe that $\tau^p$ distinguishes points for any $p\in[1,\infty]$. This follows from the observation, that $\tau^p$ is compatible with the relation $\ll$ which is inherited from the Lorentzian cylinder. Since the Lorentzian cylinder is distinguishing, so is $\ll$ in $Cyl^p$ and therefore also $\tau^p$.\\
For $p<\infty$, a topology which fits the requirements is the manifolds topology inherited from the Lorentzian cylinder. This follows from the observation in the proof \ref{lem_isLorLenSP} that $\tau^p$ is continuous in this topology for $<\infty$ and that the sets $\{(x,y)\in Cyl^p\times Cyl^p \pp \tau^p (x,y)\geq \epsilon\}$ are closed sets in a space $Cyl^p\times Cyl^p$ which is compact.\\
For $p=\infty$, we have to show that there can not exist a suitable topology in order to show that $Cyl^\infty$ is not a bounded Lorentzian metric space. \\
For this, consider the set $\{(1,\phi)\}\in Cyl^\infty$ for some $\phi\in S^1$ and assume there would exist a suitable topology $T$.
If $\{(1,\phi)\}$ is open, sets of the form $\{(x,y)\pp \tau^\infty\geq \epsilon\}$ are not compact anymore, since any sequence converging to $(1,\phi)$ in the metric topology from before, does not converge in $T$ (except if they are constant at some point). Therefore we find sequences without converging subsequence. Hence $\{(1,\phi)\}$ can not be open. On the other hand, using the notion $\tau_x(y)\mDef\tau(x,y)$, we can express $\{(1,\phi)\}$ as $\left(\tau^\infty_{(0,\phi+1)}\right)^{-1} \left([0,\frac{1}{2})\right)\cap \left(\tau^\infty_{(0,\phi-1)}\right)^{-1} \left([0,\frac{1}{2})\right)$ which means that, by the continuity of $\tau^\infty$, we can deduce that $\{(1,\phi)\}$ must be open since $[0,\frac{1}{2})$ is open in $[0,\infty)$.
\end{proof}
\begin{remark}
For $p=1$ and $p=\infty$, the inverse triangle inequality becomes an equality for sets of non-collinear vectors. Consider for example $|(3,1)|^l_1 = 2 = |(1,0)|^l_1 + |(2,1)|^l_1$. For $p=\infty$, for timelike vectors, $|\cdot|^p$ depends only on the time component and the triangle inequality is an equality for any set of timelike vectors. This means that geodesics split.
\end{remark}
Looking at the timelike sectional curvatures of $(\R^{1,1},\tau^p)$, we find:
\begin{lemma}
For any $p \in [1,\infty),\ p\neq 2$, the space $(\R^{1,1},\tau^p)$ has neither an upper nor a lower sectional curvature bound. For $p=2$ and $p=\infty$, the space is flat.
\end{lemma}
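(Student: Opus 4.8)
The plan is to split the claim according to the exponent and attack the two regimes by entirely different mechanisms. For $p\neq 2$ the engine is Lemma \ref{Lem_LorUnb}: since $\tau^p$ is translation invariant and $\tau^p(0,x)=|x|^p$ is homogeneous, it suffices to produce a single pair $x,y$ with $x,y,x+y,x-y\gg 0$ for which the Lorentzian parallelogram identity $2\of{|x|^p}^2+2\of{|y|^p}^2=\of{|x+y|^p}^2+\of{|x-y|^p}^2$ fails; the lemma then yields the absence of both curvature bounds at once. For $p=2$ and $p=\infty$ the relevant parallelogram defect vanishes, so the lemma gives nothing and I would instead verify flatness directly.

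For $p\in[1,\infty)$, $p\neq 2$, I would use the explicit family $x=(1+t,\epsilon)$, $y=(1-t,-\epsilon)$, so that $x+y=(2,0)$ and $x-y=(2t,2\epsilon)$; all four vectors are future timelike exactly when $0<\epsilon<t<1-\epsilon$, which is trivial to arrange. The parallelogram defect is then
\begin{equation*}
D(t,\epsilon)=4+4\of{t^p-\epsilon^p}^{2/p}-2\of{(1+t)^p-\epsilon^p}^{2/p}-2\of{(1-t)^p-\epsilon^p}^{2/p},
\end{equation*}
which vanishes at $\epsilon=0$ (the configuration collapses onto the time axis). Expanding as $\epsilon\to 0$ with $t$ fixed, the constant term cancels and the leading correction is $\tfrac{4}{p}\,g(t)\,\epsilon^p$ with $g(t)=(1+t)^{2-p}+(1-t)^{2-p}-2t^{2-p}$. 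It then remains only to see that $g$ is nonzero for some admissible $t$: for $p<2$ one has $g(t)\to 2$ as $t\to 0^+$, while for $p>2$ the term $2t^{2-p}$ blows up and $g(t)\to-\infty$; in either regime $g$ is nonzero for small $t$, so $D\neq 0$ once $\epsilon$ is small enough, and Lemma \ref{Lem_LorUnb} applies. (The exponent $2-p$ is exactly $0$ at $p=2$, which is precisely why $g$, and hence the whole defect, vanishes there.)

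For $p=2$ the space $(\R^{1,1},\tau^2)$ is isometric to two-dimensional Minkowski space, a flat Lorentzian manifold, so both comparison inequalities hold with equality and the timelike sectional curvature is identically $0$. For $p=\infty$ I would argue directly from the Remark: for timelike related points $\tau^\infty(a,b)=b_0-a_0$ depends only on the time coordinate, so every timelike triangle $a\ll b\ll c$ satisfies $\tau^\infty(a,c)=\tau^\infty(a,b)+\tau^\infty(b,c)$, i.e. the reverse triangle inequality is an equality. Its comparison triangle in Minkowski space ($K=0$) is therefore degenerate (collinear), and for any point $M$ on a maximizer from $b$ to $c$ a short time-coordinate computation gives $\tau^\infty(a,M)=M_0-a_0=\bar\tau(\bar a,\bar M)$, the comparison distance. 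The comparison is thus attained with equality, so both the upper and the lower $K=0$ bound hold and the space is flat.

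The main obstacle is the case $p\neq 2$, and specifically the tension between two requirements. The parallelogram criterion forces all of $x,y,x\pm y$ to be timelike, which rules out symmetric choices (equal time components make $x-y$ spacelike) and pushes one toward vectors with small spatial part, exactly where the defect is small as well; simultaneously one must certify non-vanishing for \emph{every} $p\neq 2$, not merely generic $p$. Isolating the leading $\epsilon^p$ term and reducing the entire question to the sign of the single-variable function $g$ near $t=0$ is what makes this tractable, since the exponent $2-p$ cleanly separates the two regimes and records the failure of $|\cdot|^p$ to be quadratic away from $p=2$. A minor additional care is needed at $p=\infty$, where maximizers between $b$ and $c$ are non-unique (geodesics split), but the time-coordinate computation returns the same value on every maximizer, so the comparison is unaffected.
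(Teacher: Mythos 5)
Your proposal is correct and follows essentially the same route as the paper: the cases $p=2$ and $p=\infty$ are handled by the same direct flatness/degenerate-comparison-triangle arguments, and for $p\neq 2$ both proofs invoke Lemma \ref{Lem_LorUnb} by exhibiting a concrete violation of the Lorentzian parallelogram law. The only difference is in the witness and its verification --- the paper fixes $x=(2,0)$, $y=(1,\tfrac{1}{4})$ and certifies the nonvanishing of the defect via the mean value theorem and the strict monotonicity of $f_p'$ for $f_p(u)=u^{2/p}$, whereas you use the family $x=(1+t,\epsilon)$, $y=(1-t,-\epsilon)$ and an expansion in $\epsilon$ --- and both verifications are sound.
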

\begin{proof}
For $p=2$, $(\R^{1,1},\tau^p)$ is the Minkowski space and therefore flat.
For $p=\infty$, any timelike triangle $abc$ satisfies $\tau^\infty(a,b) + \tau^\infty(b,c) = \tau^\infty(a,c)$. Thus, the comparison triangle in $\R^{1,1}$ is degenerate and $\tau^\infty(aM) = |\Bar{a}\Bar{M}|$.\\
   For any other $p$, we use lemma \ref{Lem_LorUnb}:\\
   Consider $x=(2,0)$ and $y = (1,\frac{1}{4})$.
   We observe that $x,y,x+y,x-y\gg0$ and calculate (using the intermediate value theorem) 
   \begin{align*}
      E(p) &= 2 (|x|^p)^2 + 2 (|y|^p)^2 -  (|x+y|^p)^2 + |x-y|^p)^2\\
      &= 8 + 2 \left(1 - \frac{1}{4^p}\right)^{2/p} -  (3^p - \frac{1}{4^p})^{2/p} -  (1 - \frac{1}{4^p})^{2/p}\\
       &= 8 + f_p\left(1 - \frac{1}{4^p}\right) \textcolor{gray}{-f_p(1) + f_p(1) -f_p(3) + f_p(3)} -  f_p\left(3^p - \frac{1}{4^p}\right)   &\null \qquad f_p(x)\mDef(x)^{2/p} \\
      &= 8 - \frac{f_p'(x_1)}{4^p} + 1 - 9 - \frac{f_p'(x_2)}{4^p}  &\null \qquad x_1\in (1-\frac{1}{4^p},1)\\
      &= \frac{f_p'(x_1) - f_p'(x_2)}{4^p} &\null \qquad x_2\in (3-\frac{1}{4^p},3)
   \end{align*}
   Since $f_p(\cdot)$ is convex for $p>2$ and concave for $p<2$ (which means that $f'_p(\cdot)$ is either monotonically increasing or decreasing), $E(p)\neq0$ for any $\in (1,\infty), p\neq 2$. This proves that the parallelogram law is not satisfied which is the conditions of \ref{Lem_LorUnb}.
\end{proof}
The statement above can be analogously expressed in the Riemann $(\R^2,l^p)$ case.\\
Having demonstrated a use case of lemma \ref{lemParRm} and \ref{Lem_LorUnb}, it is now time to prove these statements:
\begin{proof} \textit{of lemma} \ref{lemParRm}: 
First, note that $(X,\dd)$ is a length space with $\dd(x,y)\mDef|y-x|$:\\ It is clear that $\dd$ is a metric.
Due to the triangle inequality, straight lines are always (one of) the shortest paths between any two points. Since $|\cdot|$ is linear, the length of a straight line is equal to the distance between its endpoints. Therefore, $\dd$ is a strictly intrinsic metric.\\
Next, we argue that there cannot be any bound of the sectional curate of $(X,\dd)$.
Since $|\cdot|$ does not come from a scalar product, we know that the polarization formula $\langle x,y\rangle \mDef \frac{1}{2}\left(|x+y|^2 -|x|^2 - |y|^2\right)$ can not define a scalar product. Since positivity and symmetry are satisfied automatically by $\langle \cdot,\cdot\rangle$, the linearity must be violated. This leads to the conclusion, that $|\cdot|$ cannot satisfy the parallelogram law $2|x|^2 + 2 |y|^2 = |x+y|^2 + |x-y|^2$ for $x,y\in X$.\\
Let us assume (WLOG as we will see) that we have some $x,y\in X$ such that\\ $2|x|^2 + 2 |y|^2 < |x+y|^2 + |x-y|^2$. Representing $x$ and $y$ by $u, v \in X$ with $x=u+v$ and $y=u-v$, we obtain $|u+v|^2 + |u-v|^2 < 2 |u|^2 + 2|v|^2$. Therefore, the parallelogram law is always broken in both directions. \\
Because $\dd$ is translation invariant, we calculate the curvature at $p=0$ and show that the assumption of a curvature bound leads to a contradiction.
Let $U\subset X$ be some open neighborhood of $0$ in which we assume the triangle comparison curvature bound to hold.
By scaling the vectors $x$ and $y$ down, we can assume that the following argument works completely inside $U$.
\begin{center}
\begin{tikzpicture}
% Define the vertices of the parallelogram
\coordinate (A) at (0,0);
 \coordinate (B) at (4,0);
 \coordinate (C) at (5,2);
 \coordinate (D) at (1,2);

 % Draw the parallelogram
 \draw (A) -- (B) -- (C) -- (D) -- cycle;

 % Label the sides
 \node at ($(A)!0.5!(D)$) [left] {\( y \)};
 \node at ($(A)!0.5!(B)$) [above] {\( x \)};

 % Draw the diagonal x-y
 \draw[dashed] (B) -- (D);

 % Draw line from origin to midpoint of diagonal
 \coordinate (M) at ($(B)!0.5!(D)$);
 \draw[->] (A) -- (M);
 \node[rotate=-35] at (2.7,1.2) {\( \frac{1}{2}(x + y) = M\)};
 % Label the midpoint
 \node at (A) [left] {\( O = a \)};
 \node at (B) [right] {\( b \)};
 \node at (D) [left] {\( c \)};
 \end{tikzpicture}
\end{center}
The inequality $\frac{1}{2}|x|^2 + \frac{1}{2}|y|^2 -|\frac{1}{2}(x-y)|^2 < |\frac{1}{2}(x+y)|^2$ can be reformulated in terms of the triangle $\triangle abc$ (see graphic above). Using the abbreviation $pq \mDef \dd(p,q)$ for distances of points, we can write:
$aM = \epsilon + \sqrt{\frac{1}{2}ab^2 + \frac{1}{2} ac^2 - \frac{1}{4} bc^2}$ for some $\epsilon >0$.
We know by the linearity of $|\cdot|$ that scaling all vectors down by a factor $\lambda\in(0,1]$ (we will write $\dd(\lambda p, \lambda q) = pq_\lambda$), causes also $\epsilon$ to scale linearly with respect to $\lambda$:
$aM_\lambda = \lambda \epsilon + \sqrt{\frac{1}{2}ab_\lambda^2 + \frac{1}{2} ac_\lambda^2 - \frac{1}{4} bc_\lambda^2}$.\\
The idea of the proof is that, if we reconstruct a comparison triangle $\triangle \bar{a}\bar{b}\bar{c}$ of $\triangle abc$, the difference $\Bar{\epsilon} \mDef \bar{a}\bar{M}$ will scale not linear but quadratically in $\lambda$ since manifolds are locally approximately flat:\\
Consider a sufficiently small triangle $abc$ in a Riemannian manifold $(M,g)$. Choosing normal coordinates $x^i$ around the point $a$, we can expand the metric in these coordinates as \\
$g_{\mu \nu}(x) = \delta_{\mu \nu} + \R_{\mu \mu' \nu \nu'} x^{\mu'} x^{\nu'} + O(|x|^3)$, hence $g_{\mu \nu}(\lambda x) = \delta_{\mu \nu} + O(|\lambda|^2)$. If we fix only $a\in M$ and determine $b$ and $c$ by a given set of side length $\lambda \text{ab}$, $\lambda \text{ac}$, $\lambda\text{bc}$, we can map these three side lengths to the length $\dd(a,M)$, where $M$ is the midpoint of $\text{bc}$.
 Using the expansion of the metric, we can expect that this distance differs from the distance in the flat case in orders of $\lambda^2$.\\
 We now verify this idea by constructing a comparison triangle with side lengths \(ab = \bar{a}\bar{b}\), \(ac = \bar{a}\bar{c}\), and \(bc = \bar{b}\bar{c}\) in \(\mathcal{S}^-_k = \frac{1}{\sqrt{k}}S^2\) for \(k>0\) and calculate the distance \(\bar{a}\bar{M}\). The case \(k<0\) works analogously. (Actually, the calculation is exactly the same when considering that \(\cos(ix) = \cosh(x)\) and \(\sin(ix) = -i\sinh(x)\)).\\
 To construct a comparison triangle in $\frac{1}{\sqrt{k}}S^2\subset \R^3$, we start by setting $\Bar{a} \mDef (\frac{1}{\sqrt{k}},0,0)$ (by arbitrary choice), $\Bar{b} \mDef \frac{1}{\sqrt{k}}(\cos(ab \sqrt{k}),\sin(ab\sqrt{k}) ,0)$ (by demanding the distance $ab = \Bar{a}\Bar{b}$, given by $\Bar{\dd}(\Bar{p},\Bar{q}) = \frac{1}{\sqrt{k}}\sphericalangle \Bar{p}\Bar{q}$) and \\ $\Bar{c} \mDef \frac{1}{\sqrt{k}}\left\{\cos (ac \sqrt{k}), \frac{\cos (bc \sqrt{k})}{\sin (ab \sqrt{k})} - \frac{\cos (ac \sqrt{k})}{\tan (ab \sqrt{k})},\sqrt{1-(\frac{\cos
    (bc \sqrt{k})}{\sin (ab \sqrt{k}) } - \frac{\cos (ac \sqrt{k})}{\tan (ab \sqrt{k})})^2-\cos ^2(ac \sqrt{k})}\right\}$ as a solution to $\sphericalangle \Bar{a}\Bar{c} = ac$ and $\sphericalangle \Bar{b}\Bar{c} = bc$. (These equations are satisfied only for sufficiently small $ab$, $ac$, and $bc$. Because we are interested in the small limit later, this shall not bother us. Additionally, this definition is just a valid solution in the case of $1-(\frac{\cos
    (bc \sqrt{k})}{\sin (ab \sqrt{k}) } - \frac{\cos (ac \sqrt{k})}{\tan (ab \sqrt{k})})^2-\cos ^2(ac \sqrt{k})\geq 0$, otherwise we would have to introduce a sign. To not complicate the calculation by introducing case distinctions, we will always assume to be in this case. The other case works completely the same.)\\
    First, we observe that $|\Bar{c}|^2 = \frac{1}{k}$. Next, we calculate $\Bar{a}\Bar{c} = \frac{1}{\sqrt{k}}\sphericalangle \Bar{a}\Bar{c} = \frac{1}{\sqrt{k}}\arccos\left( \frac{k\cos (ac \sqrt{k})}{k} \right) = ac$.
    And finally
    \begin{align*}
     \Bar{b}\Bar{c} &=\frac{1}{\sqrt{k}}\arccos\left( \cos(ab\sqrt{k}) \cos(ac\sqrt{k}) + \sin(ab\sqrt{k}) \left(\frac{\cos (bc\sqrt{k})}{\sin (ab\sqrt{k})} - \frac{\cos (ac\sqrt{k})}{\tan (ab\sqrt{k})}\right)\right)\\
    &=\frac{1}{\sqrt{k}}\arccos\left( \cos(ab\sqrt{k}) \cos(ac\sqrt{k}) + \cos (bc\sqrt{k}) - \cos (ac\sqrt{k})\cos (ab\sqrt{k})\right)\\
     &= bc
     \end{align*}
    Let $\Bar{M}$ be the midpoint of $\Bar{b}\Bar{c}$ (we can calculate it in $\R^3$ and then scale it to the $\frac{1}{\sqrt{k}}S^2$). The comparison distance of $aM$ is
    \begin{align*}
     \scalebox{0.75}{%
         $%
        \begin{aligned}
             \Bar{a}\Bar{M} &= \frac{1}{\sqrt{k}}\cos^{-1}\left(\frac{\cos(ab \sqrt{k}) + \cos(ac\sqrt{k})}{\sqrt{\left(\cos (ac\sqrt{k}) + \cos (ab\sqrt{k})\right)^2 +\left(\frac{\cos (bc\sqrt{k})}{\sin (ab\sqrt{k})} - \frac{\cos (ac\sqrt{k})}{\tan (ab\sqrt{k})}+ \sin (ac\sqrt{k})\right)^2 + \sqrt{1-(\frac{\cos
            (bc)}{\sin (ab\sqrt{k}) } - \frac{\cos (ac\sqrt{k})}{\tan (ab\sqrt{k})})^2-\cos ^2(ac\sqrt{k})}^2}}\right)\\
            &= \frac{1}{\sqrt{k}}\cos^{-1}\left(\frac{\cos (ab\sqrt{k})+\cos (ac\sqrt{k})}{2 \cos \left(\frac{bc\sqrt{k}}{2}\right)} \right)
         \end{aligned}%
         $%
     }
\end{align*}
Scaling all distances $pq \to \lambda\cdot pq = pq_\lambda $ as above and expanding the distance $\Bar{a}\Bar{M}_\lambda$ in terms of $\lambda$ for $\lambda \searrow 0$ we get:
    $$\Bar{a}\Bar{M}_\lambda = 0 + \lambda \cdot \frac{\sqrt{2ab^2 + 2ac^2 - bc^2}}{2} -
   \frac{\lambda^3}{6} k \underbrace{\frac{(ab - ac - bc) (ab + ac - bc) (ab - ac + bc) (ab + ac + bc)}{8 \sqrt{2 (ab^2 + ac^2) - bc^2}}}_{>0 \text{   since $S^2$ has positive sectional curvature}}
   + O(\lambda^4)$$
Contrary to our expectations, the difference $\Bar{\epsilon}_\lambda$ here does not scale quadratically with $\epsilon$, but even cubical.\\
Concluding, we can write for $\lambda$ small enough:
$$aM_\lambda = \epsilon_\lambda + \sqrt{\frac{1}{2}ab_\lambda^2 + \frac{1}{2} ac_\lambda^2 - \frac{1}{4} bc_\lambda^2} = \Bar{a}\Bar{M}_\lambda + \epsilon_\lambda - \Bar{\epsilon}_\lambda> \Bar{a}\Bar{M}_\lambda$$
showing that $(X,\dd)$ has no sectional curvature bound below by $k$ for any $k>0$. Taking $k<0$ and switching $x,y$ for $u,v$ gives the same result in the other direction, proving the claim. 
\end{proof}

In the Lorentzian case, that is, if we are given a vector space with a signature $(1,n)$ product $(X,\langle\cdot,\cdot\rangle)$, the time separation function is given by 
$\tau(x,y)\mDef\begin{cases}
     \sqrt{- \langle y-x, y-x\rangle}& x\ll y\\
    0 & \text{else}
\end{cases}$.\\
Consequently, the polarization formula $\langle x,y\rangle = \frac{1}{2}\left( \tau(0,x)^2 + \tau(0,y)^2 - \tau(0,x+y)^2 \right)$ holds only for $0\ll x,y$ (and therefore also $0\ll x+y$).\\
Thereby, the parallelogram law $2 \tau(0,x)^2 + 2 \tau(0,y)^2 = \tau(0,x+y)^2 + \tau(0,x-y)^2$ holds true only if $x,y,x+y,x-y\gg 0$. This is the reason for the formulation of lemma \ref{Lem_LorUnb}. The rest of the proof of lemma \ref{Lem_LorUnb} is analogous to the proof of lemma \ref{lemParRm}.
\begin{remark}
Lemma \ref{lemParRm} and \ref{Lem_LorUnb} hold not only for vector spaces but also for spaces that locally look like a normed vector space, for example Finsler manifolds (see \cite{braun2023optimaltransporttimelikelower}).
\end{remark}
\subsection{Gromov-Hausdorff Continuity of \texorpdfstring{$(\R^{1,1},\tau^p)$}{TEXT}}
We can use $(\R^{1,1},\tau^p)$ also as an example for Gromov-Hausdorff continuity and convergence.\\
If we consider the entire space $\R^{1,1}$, there is little chance of proving GH continuity with respect to $p$. Therefore we take a look at the cylinder $Cyl^p$.
To motivate how the GH-distance behaves, we consider the identity relation $\text{id} = \{(x,x)\pp x\in Cyl^p\}\in Corr(Cyl^p,Cyl^q)$ and calculating the distortion for a pair of points on the past and future boundary $x^- \mDef (0,0)\in \partial^-Cyl^p$ and $x^+ \mDef (1,\phi)\in \partial^+ Cyl^p$ (for $\phi\geq 0$):

$$\left| \tau^p(x^-,x^+) - \tau^q(x^-,x^+)\right| = \left| (1-\phi^p)^{1/p} - (1-\phi^q)^{1/q} \right|$$

The function $ (1-\phi^p)^{1/p}$ depends continuously on $p$ and has a pointwise limit for $p\to\infty$, but does not converge uniformly. This behavior can also be found in general, looking at $Cyl^p$:

\begin{lemma}
    \label{lem_GHPropOfCyl}
    $Cyl^p$ is GH continuous in $p$ for all $p\in[1,\infty)$.
    For $p\to\infty$, the sequence $\{Cyl^p\}_p$ diverges and is therefore not precompact, also the pointwise limit of $\tau^p$ exists. 
\end{lemma}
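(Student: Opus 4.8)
The plan is to treat the two assertions separately: the finite range $p\in[1,\infty)$ via the identity correspondence, and the limit $p\to\infty$ by identifying the only possible Gromov--Hausdorff limit and showing it falls outside the category of bounded Lorentzian metric spaces.

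For continuity at a fixed $p_0\in[1,\infty)$ I would exploit that all the $Cyl^p$ share the same compact underlying set $[0,1]\times S^1$, so the identity relation $R=\{(x,x)\}$ is a correspondence and $d_{GH}^-(Cyl^p,Cyl^{p_0})\le \tfrac12\sup_{x,y}|\tau^p(x,y)-\tau^{p_0}(x,y)|$. It then suffices to show $\tau^p\to\tau^{p_0}$ uniformly on the compact pair space $([0,1]\times S^1)^2$. The cleanest route is to observe that $(p,x,y)\mapsto\tau^p(x,y)$ is jointly continuous on $[1,\infty)\times([0,1]\times S^1)^2$ (extending the continuity of $\tau^p$ recorded in Lemma \ref{lem_isLorLenSP}; the only failure of continuity occurs at $p=\infty$). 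Restricting to $[p_0-\epsilon,p_0+\epsilon]\times K$ and invoking uniform continuity on this compact set yields $\sup_{x,y}|\tau^p-\tau^{p_0}|\to0$ as $p\to p_0$, hence continuity. The one point needing care is joint continuity across the light cone, where $\tau^p=0$; there one uses the explicit formula together with the lower semicontinuity already noted to see that the two branches glue continuously.

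For the limit, the pointwise existence of $\tau^\infty$ is immediate from the explicit formula, and it is exactly the $\tau^\infty$ already exhibited. To prove divergence I would argue that $\{Cyl^p\}$ has no limit inside the category of bounded Lorentzian metric spaces. The starting observation is that the boundary profile $(1-\phi^p)^{1/p}$ converges to the indicator of $[0,1)$ only pointwise, never uniformly, so no identity-type correspondence can force convergence. I would then show that the only admissible limit is $Cyl^\infty$: if a subsequence $Cyl^{p_k}$ converged to a bounded Lorentzian metric space $Y$, then along associated almost-optimal correspondences the time separations of corresponding points converge, and comparing with the pointwise limit on a countable dense subset of $[0,1]\times S^1$ would force $\tau_Y$ to agree with $\tau^\infty$, so that $Y$ would satisfy $d_{GH}^-(Y,Cyl^\infty)=0$. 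Since the lemma above shows that $Cyl^\infty$ is not a bounded Lorentzian metric space (no admissible topology exists, because $\tau^\infty$ is merely lower semicontinuous and the singletons $\{(1,\phi)\}$ obstruct the required compactness), this is a contradiction, so no subsequence converges and the family is not precompact.

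The hard part is precisely this identification of the limit, that is, converting the pointwise-but-not-uniform convergence of $\tau^p$ near the light cone into a statement valid for \emph{every} correspondence. Because $d_{GH}^-$ is an infimum over all correspondences, including spatial reparametrisations of the circle, one must rule out that a cleverly chosen correspondence hides the discrepancy, which is concentrated in the ever-thinner near-cone region of width $\sim 2^{-p}/p$. The natural tool is a rigidity estimate: a pair whose $\tau^p$ is within $\delta$ of the maximal value $1$ must be within spatial distance $\sigma_p(\delta)=(1-(1-\delta)^p)^{1/p}$ of being vertical. The subtlety, and the reason the argument is delicate, is that $\sigma_p(\delta)\to1$ as $p\to\infty$ for every fixed $\delta>0$, so at any fixed Gromov--Hausdorff resolution the near-maximal sets of $Cyl^p$ and of $Cyl^\infty$ already look alike; the contradiction must therefore be extracted from the interaction between near-maximal timelike pairs and genuinely spacelike pairs, exactly where $\tau^\infty$ jumps. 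As a fallback that avoids naming the limit, the same distinguishing configuration (a timelike pair of near-maximal separation together with a third point spacelike to one of its endpoints) can be used to bound $d_{GH}^-(Cyl^{p},Cyl^{q})$ from below along a subsequence, which already yields non-precompactness.
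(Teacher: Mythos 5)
Your treatment of the continuity statement on $[1,\infty)$ --- identity correspondence plus uniform convergence of $\tau^p\to\tau^{p_0}$ on the compact square $([0,1]\times S^1)^2$ --- is essentially the paper's argument; the paper just makes the uniform estimate explicit by reducing to $\sup_\phi\left|(1-|\phi|^p)^{1/p}-(1-|\phi|^q)^{1/q}\right|$ and noting Lipschitz dependence on $q$. That half is fine.

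The divergence half has a genuine gap, and you have in fact located it yourself: you never produce a lower bound on $d_{GH}^-(Cyl^p,Cyl^q)$ that is valid for \emph{every} correspondence, which is what non-precompactness requires. Your primary route (identify the only possible limit as $Cyl^\infty$ and contradict the fact that $Cyl^\infty$ is not a bounded Lorentzian metric space) fails at two points. First, the claim that any subsequential limit $Y$ must satisfy $\tau_Y=\tau^\infty$ is not justified: along almost-optimal correspondences you only control $\tau$-values of corresponding \emph{pairs}, and different correspondences need not match points of $Cyl^{p_k}$ with points of $Y$ in any consistent way, so pointwise convergence of $\tau^{p_k}$ on the fixed underlying set does not transfer to $Y$. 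Second, even granting $d_{GH}^-(Y,Cyl^\infty)=0$, this is not yet a contradiction: positivity of $d_{GH}^-$ is only established between pairs of bounded Lorentzian metric spaces (see \cite{minguzzi2022lorentzian}), so a space in the class can a priori sit at distance zero from a space outside it (e.g.\ distance zero is achieved by adjoining non-distinguished points). Your fallback configuration (a near-maximal timelike pair plus a spacelike witness) is the right geometric input, but as stated it only estimates the distortion of particular correspondences. The paper closes exactly this quantifier problem with an external tool: Theorem 3 of \cite{muller2022gromovMetricAndDimension} bounds $d_{GH}^-$ from below by one half of the \emph{metric} Gromov--Hausdorff distance of the associated Noldus metric spaces; one then checks that the Noldus distance between any two fixed distinct points on the middle circle of $Cyl^p$ tends to $\tfrac12$ as $p\to\infty$, so the covering numbers at scale $\tfrac14$ are unbounded in $p$, and since metric GH closeness controls covering numbers no subsequence can be Cauchy. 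To make your proof work you must either invoke that comparison theorem or prove some correspondence-independent lower bound of this kind yourself; the rigidity estimate you sketch does not yet do so.
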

\vspace{0 cm}
\begin{proof}
Because of the homogeneity of $\tau^p$, the difference between $\tau^p$ and $\tau^q$ is the greatest between $x\in \partial^- Cyl^p$ and $y\in \partial^+ Cyl^p$ (using the identity correspondence). We can abbreviate $y-x \mDef (1,\phi)$ for $\phi \in (-\pi,\pi)$. Since $\tau^p(x,y)$ is zero for $x,y$ spacelike separated for any $p$ and because of $\Z_2$ symmetry, we can further assume $\phi\in (0,1)$ and calculate the distortion of the identity as an upper bound for $2\cdot \dd_{GH}$:
\begin{equation*}
2 \cdot \dd_{GH}(Cyl^p,Cyl^q) \leq \text{dist}(\text{id}) = \sup_{x,y\in Cyl^l}\left|\tau^p(x,y) - \tau^q(x,y)\right| = \sup_{\phi\in (0,1)}\left| \left( 1 -|\phi|^p \right)^{1/p} - \left( 1 -|\phi|^q \right)^{1/q}  \right|
\end{equation*}
For fixed $\phi$ and $p$, this is Lipschitz continuous in $q$ and vanishes for $p=q$. Therefore, the maximum in $\phi$ is Lipschitz continuous in $p$ and $q$ and vanishes for $p=q$.
This means that in the GH metric, the spaces $Cyl^p$ are continuous w.r.t $p$.\\
To show that $Cyl^p$ for $p \to \infty$ diverges, we use a result from \cite{muller2022gromovMetricAndDimension}.\\
\begin{minipage}[t]{0.68\textwidth}
 In theorem 3 it is stated that the Lorentzian GH distance is bounded below by $\frac{1}{2}\Phi^*\dd_{GH}^+$. This is the GH distance when mapping an almost Lorentzian pre-length space to a metric space using the Noldus metric \\
 $\dd_{\tau^p}(x,y) \mDef \sup_{z\in Cyl^p} \left| \tau^p(x,z) - \tau^p(y,z) \right|$ for $x,y\in Cyl^p$.\\
First, we estimate the Noldus distance for two points in the middle of the cylinder $(\frac{1}{2},\phi_1)$ and $(\frac{1}{2},\phi_2)$ with $0< \phi_1 - \phi_2 < \frac{1}{2}$:
\end{minipage}
\begin{minipage}[t]{0.32\textwidth}
\vspace{0cm}
\hspace{0.5 cm}
\begin{tikzpicture}
% Draw the top ellipse
\draw (0,0) ellipse (2 and 0.3);
% Draw the sides
\draw (-2,0) -- (-2,-2);
\draw (2,0) -- (2,-2);
% Draw the bottom ellipse
\draw (0,-2) ellipse (2 and 0.3);
% Draw the middle horizontal line for reference
\draw[dashed] (0,-1) ellipse (2 and 0.3);
% Draw the points in the middle
\fill (0.2,-1.3) circle (2pt);
\fill (-0.2,-1.3) circle (2pt);
% Label the points
\node at (0.5,-1.48) {\(\phi_1\)};
\node at (-0.5,-1.48) {\(\phi_2\)};
% Draw the triangle
\draw[dashed] (0.2,-1.3) -- (-0.8,-0.25);
\draw[dashed] (0.2,-1.3) -- (1.4,-0.2);
\draw[dotted] (-0.2,-1.3) -- (0.83,-0.26);
\draw[dotted] (-0.2,-1.3) -- (-1.3,-0.2);
\fill (0.83,-0.26) circle (2pt);
\node at (0.83,0) {\(\varphi\)};
\end{tikzpicture}
\end{minipage}

\begin{align*}
    \dd_{\tau^p} \left((\frac{1}{2},\phi_1),(\frac{1}{2},\phi_2)\right) &= \sup_{z\in Cyl^p} \left|\tau^p\left((\frac{1}{2},\phi_1),z\right) - \tau^p \left((\frac{1}{2},\phi_2),z\right)\right|\\
    &\geq \left|\tau^p\left((\frac{1}{2},\phi_1),(1,\phi_2+\frac{1}{2})\right) - \tau^p \left((\frac{1}{2},\phi_2),(1,\phi_2+\frac{1}{2})\right)\right| \\
    &\geq \left|  \left( \frac{1}{2} - \left( \frac{1}{2} - (\phi_1 - \phi_2)) \right)^p\right)^{1/p} - 0\right| \quad \overset{p \to \infty}{\longrightarrow} \quad \frac{1}{2}
\end{align*}
Hence the Noldus distance for any two points in the middle of the cylinder grows to $\frac{1}{2}$ for $p\to \infty$. By thfe triangle inequality, these two points can not be inside the same ball of radius $\frac{1}{4}$ for $p$ large enough.
The number of $\frac{1}{4}$-balls to cover $Cyl^p$ is thereby unbounded for $p\to\infty$, meaning that $\{Cyl^p\}_p$ is not uniformly totally bounded with the Noldus metric.\\
For a given $p\in \N$, let $N_p$ be the number of $\frac{1}{8}$ - balls to cover $Cyl^p$. We then find a $q_p\in\N$ such that for all $q\geq q_p$, the space $Cyl^q$ can not be covered by $N_p$ balls of radius $\frac{1}{8}$. Hence
$\Phi^*\dd^+_{GH}(Cyl^p,Cyl^q) \geq \frac{1}{16}$.\\
Using $\frac{1}{2}\Phi^*\dd_{GH}^+ \leq \dd_{GH}^-$, for any $p\in \N$ and $q\geq q_p$, $\dd^-_{GH}(Cyl^p,Cyl^q)\geq \frac{1}{32}$ and therefore\\ $\{Cyl^p \pp p\in \N\}$ can not contain any converging subsequence.
\end{proof}

\subsection{Volume, Dimension and Ricci curvature}
Finally, we want to take a look at the timelike Ricci curvature of $Cyl^p$ in the sense of $TCD$ conditions. \\
As mentioned earlier, these conditions need a background measure. The easiest choice for this is to inherit the volume measure from the Euclidean background structure of $Cyl^p$. Another option is to calculate the Hausdorff measure and use this for the $TCD$ condition.
\begin{lemma}
\begin{equation*}
    \dim^\tau(Cyl^p) = \min\{p,2\} \text{  and  } \nu^{ \min\{p,2\}}(Cyl)=\begin{cases}
        0 & p<2\\
        2 \pi & p\geq 2
    \end{cases}
\end{equation*}
\end{lemma}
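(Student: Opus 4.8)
The plan is to compute the Lorentzian Hausdorff measure $\nu^N(Cyl^p)$ directly from its definition, splitting the analysis into two regimes according to whether the "natural" scaling dimension $p$ of the volume element falls below or above the ambient comparison dimension $2$. The key observation is that for the diamond $J(p_i,q_i)$ the quantity $\rho_N(J(p_i,q_i)) = \omega_N\,\tau^p(p_i,q_i)^N$ behaves, for a small diamond of Euclidean diameter $\sim\delta$, like $\delta^{p}$ in the $\tau^p$-size (since $\tau^p$ scales its time and space directions differently, the largest inscribed $\tau^p$-length in a $\delta$-ball scales like $\delta$, but the volume of the diamond in the Euclidean sense scales like $\delta^2$). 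The heart of the matter is to track how $\sum_i \rho_N(J(p_i,q_i))$ behaves under a covering with mesh $\delta\to 0$, and to identify the critical exponent $N$ at which the infimum transitions from $0$ to $+\infty$.

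\medskip

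\textbf{First I would establish the upper bound.} I would cover $Cyl^p$ by a grid of small causal diamonds of uniform Euclidean diameter $\delta$, roughly $\delta^{-2}$ of them to fill the two-dimensional cylinder. For each such diamond $\tau^p(p_i,q_i)\sim\delta$, so $\rho_N(J(p_i,q_i))\sim\omega_N\,\delta^N$, and the total is $\sum_i\rho_N\sim \delta^{-2}\cdot\omega_N\,\delta^N = \omega_N\,\delta^{N-2}$. As $\delta\to0$ this tends to $0$ if $N>2$ and to $\infty$ if $N<2$, which already pins the dimension at $\le 2$ and computes $\nu^2\sim\text{(const)}$; the precise constant $2\pi$ must then come from an optimal efficient covering argument, matching $\omega_2$ against the Euclidean area $2\pi$ of the unit cylinder $[0,1]\times S^1$.

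\medskip

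\textbf{The subtlety — and the main obstacle — is the regime $p<2$}, where the claim is $\nu^N(Cyl^p)=0$ for the relevant $N=\dim^\tau=p$. Here the naive square-grid covering is wasteful: because $\tau^p$ with $p<2$ makes diamonds "thin", one can cover $Cyl^p$ much more cheaply using diamonds adapted to the anisotropic scaling of $\tau^p$. I expect one must exploit the concavity of $|\cdot|^p$ established in the first lemma to show that long, thin diamonds (large in the time direction, narrow in space) have $\tau^p$-length comparable to their Euclidean height while contributing negligible overlap, so that $\sum_i\tau^p(p_i,q_i)^N$ can be driven to $0$ when $N=p$. Concretely, I would choose a covering by $\sim\delta^{-1}$ diamonds each of $\tau^p$-size $\sim 1$ but Euclidean space-width $\sim\delta$; then $\sum_i\rho_N\sim\delta^{-1}\cdot\omega_N$ fails, so instead one tiles with $\tau^p$-size $\sim\delta^{\alpha}$ for a carefully chosen $\alpha=\alpha(p)$ and counts that $\sum_i\rho_N(J)\to 0$ precisely at $N=p$, while diverging below it. Balancing the covering count against $\omega_N\,\tau^{pN}$ to locate the threshold $N=p$ is the delicate step, and verifying $\dim^\tau = \min\{p,2\}$ requires checking that neither regime's threshold can be improved.

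\medskip

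\textbf{Finally I would assemble the dimension statement} $\dim^\tau(Cyl^p)=\min\{p,2\}=\inf\{N\ge 0 : \nu^N<\infty\}$ from the two computations: for $p\ge 2$ the finite value $\nu^2=2\pi$ forces the dimension to be exactly $2$, whereas for $p<2$ the vanishing $\nu^p=0$ together with divergence for $N<p$ forces the dimension to be $p$. The two cases meet consistently at $p=2$, recovering the Minkowski value $\nu^2=2\pi$, which serves as a useful sanity check on the normalizing constant $\omega_N$.
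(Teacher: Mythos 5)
Your overall strategy---isotropic grid coverings for the upper bound and the $p\ge 2$ case, anisotropic coverings for $p<2$---matches the paper's, but the step you yourself flag as ``the delicate step'' is exactly where the proof lives, and your description of the anisotropic coverings points in the wrong direction. Note first that a causal diamond $J(x,y)$ in $\R^{1,1}$ with $y-x=(h,s)$ always has Euclidean temporal \emph{and} spatial extent equal to $h$; a diamond that is ``large in the time direction, narrow in space'' does not exist, and the only genuinely thin diamonds are the nearly null ones. The paper's key device is precisely such a \emph{tilted} (near-null) sliver: splitting a diamond of side $1/k$ into $n$ slabs parallel to one null direction produces diamonds with $|\Delta x_0|=\tfrac1k(\tfrac12+\tfrac1{2n})$, $|\Delta x_1|=\tfrac1k(\tfrac12-\tfrac1{2n})$, hence $\tau^p\sim \tfrac1k\bigl((\tfrac12+\tfrac1{2n})^p-(\tfrac12-\tfrac1{2n})^p\bigr)^{1/p}\sim \tfrac1k\,n^{-1/p}$, which is \emph{much smaller} than the Euclidean diameter $\sim 1/k$. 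The covering cost of one $1/k$-diamond is then $n\cdot k^{-d}n^{-d/p}=k^{-d}n^{1-d/p}\to 0$ for $d>p$, and summing over $k^2$ sub-diamonds gives $k^{2-d}n^{1-d/p}\to0$ for every fixed mesh $k$; this exponent $1/p$ in $\tau^p\sim n^{-1/p}$ is the whole content of the $p<2$ case, and it cannot be recovered from slabs whose $\tau^p$-length is ``comparable to their Euclidean height'' (for those the sum only gets worse as they thin out), nor from an undetermined ansatz $\tau^p\sim\delta^{\alpha(p)}$.

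Two further gaps. First, for $p>2$ your grid covering gives $\nu^2<\infty$, but the claims $\nu^2=2\pi$ and $\dim^\tau=2$ also need a \emph{lower} bound, i.e.\ that no covering---in particular no tilted one---does better; the paper handles this by showing its covering functional $v(p,d,n,k)$ is increasing in $n$ when $d\le 2<p$, so tilting never helps in that regime, whereas you do not address lower bounds at all. Second, be careful with the exact threshold for $p<2$: the vanishing $\nu^d=0$ is obtained for $d>p$, while at $d=p$ the sliver covering of mesh $1/k$ still contributes $k^{2-p}\cdot 2^{1-p}p\to\infty$, so your claimed dichotomy ``$\nu^p=0$ and divergence strictly below $p$'' does not follow from either your sketch or the covering families used in the paper.
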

\begin{remark}
    This is a bit surprising, comparing it to the metric setting. Here the Hausdorff dimension can be larger than the (expected) vector space dimension but not smaller.
\end{remark}
\begin{proof}
For our argument, we will consider two scenarios. In the first, we split a given diamond into $n$ tilted diamonds (first graphic). In the second one we split it evenly into $k^2$ symmetric diamonds (second graphic). We can then combine both separation processes (third graphic).
    \begin{center}
        \begin{tikzpicture}
            \draw[rotate=45] (-1,-1) rectangle (1,1);
            \draw[rotate=45] (-1,0.5) -- (1,0.5);
            \draw[rotate=45] (-1,0.25) -- (1,0.25);
            \draw[rotate=45] (-1,0.75) -- (1,0.75);
            \draw[rotate=45] (-1,-0.25) -- (1,-0.25);
            \draw[rotate=45] (-1,-0.75) -- (1,-0.75);
            \draw[rotate=45] (-1,-0.5) -- (1,-0.5);
            \draw[rotate=45] (-1,0) -- (1,0);
            \coordinate (A) at (0,-1.4);
            \coordinate (B) at (-1.4,0); 
            \draw[thick, decorate, decoration={brace,mirror, amplitude=10pt}]
            (B) -- (A) node[midway, left, xshift=-7pt, yshift=-7pt,rotate=45] {$n$};
        \end{tikzpicture}   
        $\quad$
         \begin{tikzpicture}
            \draw[rotate=45] (-1,-1) rectangle (1,1);
            \draw[rotate=45] (-1,0) -- (1,0);
            \draw[rotate=45] (-1,0.5) -- (1,0.5);
            \draw[rotate=45] (-1,-0.5) -- (1,-0.5);
            \draw[rotate=45] (0.5,-1) -- (0.5,1);
            \draw[rotate=45] (-0.5,-1) -- (-0.5,1);
            \draw[rotate=45] (0,-1) -- (0,1);
            \coordinate (A) at (0,-1.4);
            \coordinate (B) at (-1.4,0); 
            \draw[thick, decorate, decoration={brace,mirror, amplitude=10pt}]
            (B) -- (A) node[midway, left, xshift=-7pt, yshift=-7pt,rotate=45] {$k$};
        \end{tikzpicture}   
        $\quad$
         \begin{tikzpicture}
            \draw[rotate=45] (-1,-1) rectangle (1,1);
            \draw[rotate=45] (-1,0) -- (1,0);
            \draw[rotate=45] (-1,0.5) -- (1,0.5);
            \draw[rotate=45] (-1,-0.5) -- (1,-0.5);
            \draw[rotate=45] (0.5,-1) -- (0.5,1);
            \draw[rotate=45] (-0.5,-1) -- (-0.5,1);
            \draw[rotate=45] (0,-1) -- (0,1);
\draw[rotate=45] (-1,0.75) -- (1,0.75);
\draw[rotate=45] (-1,0.875) -- (1,0.875);
\draw[rotate=45] (-1,0.625) -- (1,0.625);
\draw[rotate=45] (-1,0.5) -- (1,0.5);
\draw[rotate=45] (-1,0.375) -- (1,0.375);
\draw[rotate=45] (-1,0.25) -- (1,0.25);
\draw[rotate=45] (-1,0.125) -- (1,0.125);
\draw[rotate=45] (-1,0) -- (1,0);
\draw[rotate=45] (-1,-0.125) -- (1,-0.125);
\draw[rotate=45] (-1,-0.25) -- (1,-0.25);
\draw[rotate=45] (-1,-0.375) -- (1,-0.375);
\draw[rotate=45] (-1,-0.5) -- (1,-0.5);
\draw[rotate=45] (-1,-0.625) -- (1,-0.625);
\draw[rotate=45] (-1,-0.75) -- (1,-0.75);
\draw[rotate=45] (-1,-0.875) -- (1,-0.875);
            \draw[rotate=45] (-1,0) -- (1,0);
        \end{tikzpicture}   
    \end{center}
    We now define the following function, which measures the pre-Hausdorff volume with respect to the tiling of \( J((0,0),(1,0)) \) into \( n \cdot k^2 \) smaller diamonds, as described above.
\begin{equation*}
    v(p,d,n,k) \mDef  n k^2 \tau^p((0,0),(\frac{1}{2kn}+\frac{1}{2k},\frac{1}{2kn}-\frac{1}{2k}))^d
    =  n \left(\frac{1}{k}\right)^{d-2} \left(\left(\frac{1}{2 n}+\frac{1}{2}\right)^p-\left(\frac{1}{2}-\frac{1}{2 n}\right)^p\right)^{d/p}
\end{equation*}
Expanding $v$ in $\frac{1}{2 n}$, we observe that 
\begin{equation*}
    \lim_{n\to \infty} v(p,d,n,k) = \begin{cases}
        0 & \text{ for }p<d\\
        \left(\frac{1}{k}\right)^{d-2} 2^{1-p} p & \text{ for }p=d\\
        \infty & \text{ for }p>d
    \end{cases}
\end{equation*}
 We will consider the cases $p<2$ and $p>2$ individually. The case $p=2$ refers to Minkowski Space and is therefore trivial.\\
 \textbf{$\null\quad p<2$: } If we choose $d\leq p$, $v(p,d,n,k)$ is increasing in $n$ which means that we can cover $J((0,0),(1,0))$ most efficiently by symmetric diamonds ($n=1$).
 If we increase $k\to \infty$ (which we have to do in order to satisfy the condition $\text{diam}(J(p_i,q_i)) < \delta$ from the definition of the Hausdorff volume) we find that $v(p,d,1,k) \xrightarrow{k\to\infty} \infty$.\\
 For $d>p$, $v(p,d,n,k)$ is decreasing to $0$ for $n\to\infty$. This means we can cover a diamond most efficiently by tilted diamonds (large $n$) and get an area of $0$ for any finite $k$. This is preserved when taking the limit $k\to \infty$, meaning that $\lim_{k\to\infty}\lim_{n\to \infty}v(p,d,n,k) = 0$.\\
 Concluding this case, the Hausdorff dimension is $p$ but the Hausdorff measure is zero.\\
  \textbf{$\null\quad p>2$: } 
  For $d<2<p$, $v(p,d,n,k)$ is increasing in $n$, which means we consider the case $n=1$. If we increase $k$, we observe $v(p,d,1,k)\xrightarrow{k\to\infty} \infty$.\\
  For $d=2$, $v(p,d,n,k)$ is still increasing in $n$, but $v(p,2,1,k)\xrightarrow{k\to\infty} 2$. Hence the Hausdorff volume and dimension coincide with the case for $p=2$.\\
  For $2<d$, we see (as expected) that $v(p,d,n,k)\xrightarrow{k\to\infty} 0$ for all choices of $n$.
\end{proof}
\begin{remark}
        The above lemma shows that the functors $\mathcal{MCS}$ and $\mathcal{FLD}$ of \cite{M_ller_2023}, which form a bijection on spacetimes, cannot be extended as a bijection to the category of Lorentzian length spaces or Lorentzian metric spaces, since different Lorentzian spaces induce the same Hausdorff volume and causal relation.
\end{remark}
Knowing that both, the Hausdorff volume and dimension induced by $\tau$, as well as the volume and dimension induced by the background Minkowski space, are bounded from above, the family $\{Cyl^p\}_{p>2}$ serves as an interesting example in the context of a potential Ricci curvature–driven Gromov compactness result, as known in the Riemannian setting.\\
Using Proposition A.21 of \cite{beran2024nonlineardalembertcomparisontheorem}, we can conclude that for $p > 2$, the space $Cyl^p$ satisfies the condition $TCD_{\Tilde{p}}(0,2)$ for all $\Tilde{p} \in (0,1)$, with respect to both background measures mentioned above. Hence, the Ricci curvature can be considered to be bounded from below by $0$, and the dimension from above by $2$. The spaces have bounded volume and timelike diameter, yet they are Gromov–Hausdorff divergent.\\
This indicates that additional assumptions are still required for a Ricci curvature–driven Gromov compactness result. For instance, such a result might hold on Lorentzian manifolds but fail in the more general setting of Lorentzian Finsler manifolds.

\section{Acknowledgements}
I would like to thank my supervisor Olaf Müller for his feedback and valuable input
at various stages throughout this project. This research was supported by the Austrian Science Fund
(FWF) [Grants DOI 10.55776/PAT1996423 and 10.55776/EFP6]. For open access purposes,
I has applied a CC BY public copyright license to any author-accepted manuscript version arising from this submission.
\newpage
%a\bibliographystyle{unsrt}
\bibliographystyle{plain}
\bibliography{lit.bib}
\end{document}